\def\R{\mathbb{R}}
\def\N{\mathbb{N}}
\def\Tr{\mathop{\rm Tr}}
\def\Id{{\rm Id}}
\def\Lip{\mathop{\rm Lip}}
\def\div{\mathop{\rm div}}
\def\dist{{\rm dist}}
\numberwithin{equation}{section}
\newtheorem{theorem}{Theorem}[section]
\newtheorem{lemma}[theorem]{Lemma}
\newtheorem{corollary}[theorem]{Corollary}
\def\tilde{\widetilde}
\newcommand{\MM}[2]{\mathbb{R}^{#1\times #2}}
\newcommand{\dv}[1]{\,{\rm d}#1}
\newcommand{\bref}[1]{(\ref{#1})}
\def\epsilon{\varepsilon}
\def\loc{_{\mathrm{loc}}}
  \newcounter{constantsnumber}
  \def\setc#1{\refstepcounter{constantsnumber}%
  \label{const#1}c_{\theconstantsnumber}}
  \def\const#1{c_{\ref{const#1}}}
\def\SO{{\rm SO}}
\def\Ftilde{\widetilde F}    % the matrix fields constructed in Korn
\def\Gtilde{\widetilde G}
\def\fext{\widetilde f}    % the extension of the decomposition in the proof of Korn
\def\gext{\widetilde g}
\def\Fext{\widetilde F}    % the extension of the decomposition in the proof of Korn
\def\Gext{\widetilde G}
\def\uext{\widetilde u}
\def\uf{u_f}    % check notation, the index should be $\fprime$.
\def\ug{u_g}
\def\uftilde{\tilde u_f}    % check notation, the index should be $\fprime$.
\def\ugtilde{\tilde u_g}
\title[Geometric rigidity with mixed growth conditions]{Korn's second inequality and geometric rigidity with mixed growth conditions}
\author{Sergio Conti}
\address{Institut f\"ur Angewandte Mathematik, Universit\"at Bonn, 
Endenicher Allee 60, 53115 Bonn, Germany.}
\email{sergio.conti@uni-bonn.de}
\author{Georg Dolzmann}
\address{Fakult\"at f\"ur Mathematik, Universit\"at Regensburg, 93040 Regensburg, Germany.}
\email{georg.dolzmann@mathematik.uni-r.de}
\author{Stefan M\"uller}
\address{Institut f\"ur Angewandte Mathematik, Universit\"at Bonn, 
Endenicher Allee 60, 53115 Bonn, Germany.}
\email{stefan.mueller@hcm.uni-bonn.de}
\begin{document}

\begin{abstract}
Geometric rigidity states that a gradient field which is $L^p$-close to the
set of proper rotations is necessarily $L^p$-close to a fixed rotation, and is
one key estimate in nonlinear elasticity. In several applications, as for
example in the theory of plasticity,  energy densities with mixed growth
appear. We show here that  geometric rigidity holds also in $L^p+L^q$ and in
 $L^{p,q}$ interpolation spaces. As a first step we prove the corresponding linear
inequality, which generalizes Korn's inequality to these spaces.
\end{abstract}

\subjclass[2010]{74B20, 35Q72, 49J45}
\keywords{Geometric rigidity, mixed growth conditions, Korn's inequality, equiintegrability,
Lorentz spaces}

\thanks{This work was partially supported by the Deutsche Forschungsgemeinschaft
through the Forschergruppe 797  {\em ``Analysis and computation of
  microstructure in finite plasticity''}, projects CO 304/4-2 (first author),
DO 633/2-1 (second author), and MU 1067/9-2 (third author).}

\date{\today}

\maketitle

\section{Introduction}

Since Korn's original contributions~\cite{Korn1906,Korn1908,Korn1909}, Korn's inequality has
played a central role in the analysis of boundary value problems in linear elasticity. 
In its basic form, Korn's inequality asserts the following. Suppose that
$\Omega\subset\R^n$ 
is a bounded, connected, Lipschitz domain and that $u\in
H^1(\Omega;\R^n)$. Then there exists a
skew-symmetric matrix $S$ such that $\|Du-S\|_2\leq c \|Eu\|_2$.  
That is, the $L^2$-norm of the skew-symmetric part of $Du$ is dominated
by the $L^2$-norm  of the symmetric part, after a suitable constant
$S$ has been subtracted.
Numerous generalizations to different boundary conditions, growth conditions
and unbounded domains have been given in the literature, see, 
e.g., \cite{Friedrichs1947,HavlacekNecasARMA1970I,KondratevOleinik1988} and the references therein.

In view of the fundamental importance of Korn's inequality in linear elasticity, it is 
not surprising that a suitable nonlinear version, which is often referred to as 
geometric rigidity, plays a central role in models in nonlinear elasticity. 
In their basic form, these estimates assert that for a deformation $u\in
H^1(\Omega; \R^n)$ 
the distance of $D u$ to a suitably chosen proper rotation $Q\in\SO(n)$ is
dominated in $L^2$ by the distance function of $Du$ to $\SO(n)$. The  
proof~\cite{FrieseckeJamesMueller2005} is based on the fact that the nonlinear estimate can be
related to the linear one since the 
tangent space to the smooth manifold $\SO(n)$ at the identity matrix is given by the
linear space of all skew-symmetric matrices. 

In fact,  geometric rigidity results are the cornerstone of 
rigorous derivations of two-dimensional
plate and shell theories from three-dimensional models in the framework of nonlinear elasticity
theory. The quantitative version by Friesecke, James and M\"uller~\cite{FrieseckeJamesMueller2005}
generalized previous work~\cite{JohnCPAM1961,John1972,Reshetnyak1967,Kohn82}
and allowed for 
the first time the derivation of limiting theories as the thickness of the three-dimensional
structure tends to zero without a~priori assumptions on the 
deformations in various scaling 
regimes~\cite{FrieseckeJamesMueller2005,FrieseckeJamesMueller2008}.

More recently, the analysis of variational models for the elastic and plastic behavior of
single crystals has led to the question of whether analogous estimates can be established under
mixed growth conditions. In this paper we generalize both Korn's 
inequality and  the corresponding nonlinear estimate
to this setting. Our main result is the following.
\begin{theorem}\label{theo:rigiditypqnonlinear}
  Let $\Omega\subset\R^n$ be a bounded and connected domain with Lipschitz boundary. Suppose
that  $1<p<q<\infty$ and that $u\in W^{1,1}(\Omega;\R^n)$, $f\in L^p(\Omega)$ and
$g\in L^q(\Omega )$ are given with
\begin{align}\label{estimatedist}
   \dist(D u, \SO(n)) = f + g \qquad a.e.\text{ in }\Omega\,.
\end{align}
Then there exist a constant $c$, matrix fields $F\in L^p(\Omega;\R^{n\times n})$,
$G\in L^q(\Omega;\R^{n\times n})$, and a proper rotation $Q\in \SO(n)$ such that
\begin{equation*}
  D u = Q + F + G \qquad a.e.\text{ in }\Omega\,,
\end{equation*}
and
\begin{align}\label{thmestimates}
  \|F\|_{L^p(\Omega;\R^{n\times n})}\le c \|f\|_{L^p(\Omega)}\,, \hskip1cm
  \|G\|_{L^q(\Omega;\R^{n\times n})}\le c \|g\|_{L^q(\Omega)}\,.
 \end{align}
The constant $c$ depends only on $n$, $p$, $q$, and $\Omega$ but not on  $u$, $f$, $g$.
\end{theorem}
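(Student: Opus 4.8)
\emph{Strategy.} The plan is to reduce Theorem~\ref{theo:rigiditypqnonlinear} to two ingredients: the linear counterpart, namely Korn's second inequality in $L^p+L^q$ (the ``first step'' announced in the abstract, which I take as available), and the classical geometric rigidity estimate in a single space $L^r$ for every $1<r<\infty$ together with its scale-invariant local version on cubes and balls (the case $r=2$ is \cite{FrieseckeJamesMueller2005}, the general exponent being by now standard). The overall scheme is that of Friesecke, James and M\"uller: localise $\Omega$, compare $Du$ with a constant rotation on each piece, and glue the local rotations to a single global rotation $Q$; the novelty is to carry out all three steps so that the $L^p$- and the $L^q$-contributions never mix.

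\emph{Normalisation.} Since $\Omega$ is bounded and $q>p$ one has $f+g\in L^p(\Omega)$, so the $L^p$ rigidity estimate applied to $u$ produces $Q_0\in\SO(n)$ with $\|Du-Q_0\|_{L^p(\Omega)}\le c\|f+g\|_{L^p(\Omega)}$. Replacing $u$ by $Q_0^{T}u$ I may assume $Du-\Id\in L^p(\Omega;\MM{n}{n})$, the identity $\dist(Du,\SO(n))=f+g$ being preserved; this serves only to fix a reference rotation, the quantitative estimate being rebuilt afterwards.

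\emph{Stopping time, gluing, radial splitting.} I would then run a Calder\'on--Zygmund/Whitney stopping-time argument on $\Omega$ (using the Lipschitz character of $\partial\Omega$ near the boundary) at a small dimensional level $\delta$, stopping on a dyadic cube as soon as its mean of $|f|^p$ exceeds $\delta^p$, or its mean of $|g|^q$ exceeds $\delta^q$, or its mean of $\dist(Du,\SO(n))^p$ exceeds $\delta^p$. This produces a good open set $\mathcal G$, on which $\dist(Du,\SO(n))\le c\delta$ a.e., and a disjoint family of stopping cubes whose union $\mathcal B$ has $|\mathcal B|\le c\delta^{-p}\|f\|_{L^p}^p+c\delta^{-q}\|g\|_{L^q}^q$. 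On the Whitney cubes of $\mathcal G$ the local $L^2$ rigidity estimate gives rotations $R_k$ that are $L^2$-close to $Du$ and with neighbour-to-neighbour increments controlled by the local means of $\dist(Du,\SO(n))=f+g$; a discrete Poincar\'e inequality in $\ell^p+\ell^q$ — the discrete shadow of the linear estimate — then forces the $R_k$ to differ from one rotation $Q$ by a field that splits as $A_k+B_k$ with $\sum_k|Q_k|\,|A_k|^p\le c\|f\|_{L^p}^p$ and $\sum_k|Q_k|\,|B_k|^q\le c\|g\|_{L^q}^q$. On $\mathcal G$ one then writes $Du-Q=(Du-\pi(Du))+(\pi(Du)-R_{k(x)})+(R_{k(x)}-Q)$, where $\pi$ is the nearest-point projection onto $\SO(n)$ (well defined there since $\dist(Du,\SO(n))\le c\delta$): the first summand has $|Du-\pi(Du)|=\dist(Du,\SO(n))=f+g\ge 0$ pointwise and is split matrix-valued as $(Du-\pi(Du))\frac{|f|}{|f|+|g|}+(Du-\pi(Du))\frac{|g|}{|f|+|g|}$, of $L^p$- resp.\ $L^q$-norm at most $\|f\|_{L^p}$ resp.\ $\|g\|_{L^q}$; the second is controlled cube by cube by the local mean of $\dist=f+g$ again; the third by the previous display. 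On $\mathcal B$ one argues similarly, separating the cubes whose stopping was triggered by $f$ (contributing to $F$) from those triggered by $g$ (contributing to $G$). Collecting everything gives $Du=Q+F+G$ with the asserted estimates.

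\emph{Main obstacle.} The delicate point — and the reason the argument genuinely departs from the single-exponent case — is keeping the two exponents decoupled: a naive execution produces error terms of interpolation type, such as $\delta^{(q-p)/q}\|f\|_{L^p}^{p/q}$ or $\|g\|_{L^q}^{q/p}$, which are not linear in the data norms and so fail to close the estimate. I expect to handle this either by iterating the above construction on the successive remainders, so that the error terms telescope in a geometric series and one recovers the clean bounds $\|F\|_{L^p}\le c\|f\|_{L^p}$, $\|G\|_{L^q}\le c\|g\|_{L^q}$, or — more systematically — by running the whole construction in the real-interpolation framework, in which boundedness on $L^p+L^q$ (and on the Lorentz spaces $L^{p,q}$) follows by interpolation from the endpoint $L^r$ estimates. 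A secondary, more routine difficulty is that the reduction to the linear Korn inequality is only legitimate once the stopping time has confined $\dist(Du,\SO(n))$, hence $|Du-\Id|$ on $\mathcal G$, below a dimensional constant, so that the quadratic linearisation error $|\sym Du-\Id|-\dist(Du,\SO(n))\lesssim|Du-\Id|^2$ — which a priori lies only in $L^{p/2}+L^{q/2}$ — can be absorbed.
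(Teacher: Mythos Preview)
Your strategy differs substantially from the paper's, and as written it has a genuine gap in the good-set analysis.

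\textbf{Where the argument breaks.} On the Whitney cubes of $\mathcal G$ you decompose $Du-Q$ into three pieces. The first and third you handle by radial splitting and by the ``discrete Poincar\'e in $\ell^p+\ell^q$'' respectively. But the middle term $\pi(Du)-R_{k(x)}$ is the real difficulty: the only control you have on $Du-R_k$ over a single cube comes from single-exponent rigidity, which gives $\|Du-R_k\|_{L^r(Q_k)}\le c\|\dist(Du,\SO(n))\|_{L^r(Q_k)}=c\|f+g\|_{L^r(Q_k)}$ for any fixed $r$, not a splitting with the $f$-part in $L^p$ and the $g$-part in $L^q$. Summing $\|g\|_{L^p(Q_k)}^p\le |Q_k|^{1-p/q}\|g\|_{L^q(Q_k)}^p$ over the Whitney family does not close. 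In effect you are invoking the local $L^p+L^q$ rigidity, which is what you want to prove. Your ``discrete Poincar\'e in $\ell^p+\ell^q$'' has the same circularity: it is not a standard lemma, and proving it would again require splitting increments that are only controlled in a single $L^r$. Finally, the interpolation remedy you suggest does not apply, because the map $u\mapsto Du-Q(u)$ is nonlinear; the paper in fact derives the Lorentz estimate as a \emph{corollary} of the $L^p+L^q$ theorem, not the other way round.

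\textbf{What the paper does instead.} The paper avoids the Whitney/gluing machinery entirely. It first applies a Lipschitz truncation at a fixed level $\lambda=2n$ (Theorem~\ref{theo:truncation}), obtaining $u_M$ with $|Du_M|\le M$; the bad set $\Omega\setminus E$ has measure controlled by $\|f\|_p^p+\|g\|_q^q$, so the correction $2M\chi_{\Omega\setminus E}$ can be absorbed into whichever of $f,g$ has the larger norm (your bad-set bookkeeping is close in spirit to this step). For the Lipschitz function $u_M$ one uses the \emph{global} $L^q$ rigidity once to produce a single rotation $Q$, then linearises: $|Eu_M-\Id|\le c\,\dist(Du_M,\SO(n))+c|Du_M-Q|^2$. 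The quadratic error is the crux, and it is handled by the $L^\infty$ bound: when $q\le 2p$ one has $\bigl\||Du_M-Q|^2\bigr\|_p^p=\|Du_M-Q\|_{2p}^{2p}\le c\|Du_M-Q\|_q^q$, which feeds directly into the linear Korn estimate. For $q>2p$ the paper runs a short induction on $k$ with $(p,q)\in\Lambda_k=\{2^kp<q\le 2^{k+1}p\}$, applying the already-proved case $(2p,q)\in\Lambda_{k-1}$ to bound $|Du_M-Q|^2$. This induction is the precise mechanism that replaces your vaguely described ``iteration on successive remainders''; it is what keeps the two exponents decoupled without ever producing interpolation-type error terms.
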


The case $p=2$ and $g=0$ was established
in~\cite[Th. 3.1]{FrieseckeJamesMueller2005},  
the generalization to $p\in(1,\infty)$ follows from the same proof with minor
changes, see \cite[Sect. 2.4]{ContiSchweizer2006b}. This version 
with mixed growth conditions was first stated without proof
in~\cite[Prop. 5]{FrieseckeJamesMueller2008} and has already been used in 
\cite{AgostinianiDalmasoDesimone2012} to study nonlinear models with weak
coerciveness and in \cite{CaterinaLucia} to study 
models of geometrically necessary dislocations in finite elastoplasticity.    
Our result implies a statement on equiintegrability (see Corollary
\ref{corollaryequi}), which has been used in
\cite{FrieseckeJamesMueller2008} 
to show strong convergence of minimizing sequences.
We believe that the generalization of Korn's inequality which is the basis for
the 
proof presented here and is stated in Theorem 
\ref{theokorn} below, is of independent interest. In Section \ref{weakspaces}
we briefly discuss how the present results imply
estimates in Lorentz $L^{p,q}$ spaces, present the statement on
equiintegrability of sequences and generalize to more than two exponents.
 
\bigskip

\textit{Notation.} We use standard notation for Lebesgue and Sobolev spaces and
omit in the notation of their norms the domain and the range if they are clear
from the context.  We use $|E|$ for the Lebesgue measure of a measurable set $E\subset\R^n$.
For $u\colon\Omega\to\R^n$ we define the symmetric part
of the deformation gradient as $Eu=(Du+Du^T)/2$. We denote the trace
of a matrix $A\in \MM{n}{n}$ by $\Tr A$ and the inner product between to vectors
$a$, $b\in \R^n$ and two matrices $A$, $B\in\MM{}{n}$ by $a\cdot b$ and
$A:B=\Tr A^TB$, respectively.
The distance $\dist(\cdot, \SO(n))$ is the usual Euclidean
distance. We use the convention that constants may change from line to line
as long as they depend only on $n$, $p$, $q$ and $\Omega$.
Finally we use the fact that an estimate of the norm of a matrix field implies a
decomposition of the matrix field with estimates. More precisely, if $A\colon\Omega
\to \MM{n}{n}$ satisfies $|A|\leq f+g$ with $f\in L^p(\Omega)$, $g\in
L^q(\Omega)$ and $f,g\ge 0$,
then 
\begin{align}\label{getdecomposition}
 A = \frac{f}{f+g}\,\chi_{\{f+g\neq 0\}}\,A +  \frac{g}{f+g}\,\chi_{\{f+g\neq 0\}}\,A = F+G
\end{align}
with $\|F\|_p\leq \|f\|_p$ and $\|G\|_q\leq \|g\|_q$.
If $f$ and $g$ are not nonnegative, we replace them first by their absolute values.

\section{Linear estimate: Korn's inequality}

We start by the generalization of Korn's inequality to the case of mixed
growth. This result will also be the key ingredient into the proof 
of Theorem~\ref{theo:rigiditypqnonlinear}.
\begin{theorem}\label{theokorn}
  Let $\Omega\subset\R^n$ be a bounded and connected domain with  Lipschitz boundary.
Suppose that $1<p<q<\infty$ and that $u\in W^{1,1}(\Omega;\R^n)$, $f \in
  L^p(\Omega;\R^{n\times n})$, 
$g \in L^q(\Omega;\R^{n\times n})$ are given with
\begin{equation}\label{Eudecomp}
Eu=\frac{1}{2}\bigl(  D u + D u^T \bigr) 
=   f   + g  \qquad a.e.\text{ in }\Omega\,.
\end{equation}
Then there exists a constant $c$, matrix fields $F\in L^p(\Omega;\R^{n\times n})$,
$G\in L^q(\Omega;\R^{n\times n})$, and a skew-symmetric matrix 
$S\in \R^{n\times n}$, that is, $S+S^T=0$, such that
\begin{equation}\label{korndecomposition}
  D u = S + F + G\qquad a.e.\text{ in }\Omega\,,
\end{equation}
and
\begin{equation}\label{kornestimate}
  \|F\|_{L^p(\Omega)}\le c \| f  \|_{L^p(\Omega)}\,, \hskip1cm
  \|G\|_{L^q(\Omega)}\le c \|g \|_{L^q(\Omega)}\,.
\end{equation}
The constant depends only on $n$, $p$, $q$, and $\Omega$. 
\end{theorem}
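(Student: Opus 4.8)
The plan is to reduce the mixed-growth Korn inequality to the classical $L^p$ and $L^q$ Korn inequalities by a Calder\'on--Zygmund type stopping-time decomposition of the symmetric strain, splitting it into a ``good'' part that is small in $L^q$ and a ``bad'' part that is controlled in $L^p$, and then applying the known linear estimate to each piece separately. More precisely, starting from $Eu = f+g$ with $f\in L^p$ and $g\in L^q$, I would like to rewrite the right-hand side as $f' + g'$ where $f'$ is controlled in $L^p$ by $\|f\|_p$ and $g'$ is controlled in $L^q$ by $\|g\|_q$, but with the additional feature that I can solve the two Korn problems $Ev_1 = f'$, $Ev_2 = g'$ (in a suitable sense, e.g.\ with zero boundary data on an enlarged domain, or via the Ne\v{c}as/Bogovskii right inverse of the symmetric gradient) and then set $Du = S + Dv_1 + Dv_2$ with $F = Dv_1$, $G = Dv_2$. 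The point is that Korn's inequality for a single exponent $r\in(1,\infty)$ is standard (it follows from the $L^r$-boundedness of Calder\'on--Zygmund operators, since $Du$ can be recovered from $Eu$ by explicit singular-integral formulas à la Ne\v{c}as), and the mixed statement is really about respecting the two-scale structure of the datum.

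The key technical step is the splitting, and here I expect to use a Lipschitz truncation / maximal function argument. The natural approach: extend $f$ and $g$ suitably (e.g.\ by reflection or by the decomposition \eqref{getdecomposition} idea already recorded in the paper) to a fixed cube, consider the potential $w$ solving $Ew = f+g$, and apply a Calder\'on--Zygmund decomposition of $Dw$ at height $t$: the part where the maximal function of $|Dw|$ exceeds $t$ is the ``$L^p$ part'', the rest is the ``$L^q$ part''. One has to be careful that truncating $Dw$ does not destroy the gradient structure nor the symmetric-gradient constraint; this is exactly the kind of issue that the Ne\v{c}as negative-norm characterization, or Acerbi--Fusco style Lipschitz truncation adapted to the symmetric gradient, is designed to handle. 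The cleanest route is probably to avoid truncating $Du$ directly and instead truncate the \emph{datum}: write the symmetric strain $e := Eu$, and use that for $e\in L^p + L^q$ there is a single function $v$ with $Ev = e$ on an enlarged domain and $\|Dv\|_{L^p+L^q} \le c\|e\|_{L^p+L^q}$, obtained by applying the Ne\v{c}as right inverse $\mathcal{B}$ of $E$, which is a matrix of Calder\'on--Zygmund operators and hence bounded on \emph{every} $L^r$, so it maps $f\mapsto Ff$ and $g\mapsto Gg$ with the desired split estimates. Then $Du - Dv$ has vanishing symmetric part, hence equals a constant skew matrix $S$ on each connected component, and connectedness of $\Omega$ gives a single $S$; setting $F = (Dv)$ restricted according to the $f$-contribution and $G$ according to the $g$-contribution via linearity of $\mathcal B$ yields \eqref{korndecomposition}--\eqref{kornestimate}.

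So the proof outline is: (i) reduce to a nice reference domain (cube or ball) by the standard localization/extension argument for Korn's inequality on Lipschitz domains, covering $\Omega$ by finitely many star-shaped pieces and patching with a partition of unity; (ii) on the reference domain, invoke the existence of a bounded linear right inverse $\mathcal{B}\colon L^r_0 \to W^{1,r}$ of the symmetric gradient operator for every $r\in(1,\infty)$ with operator norm depending only on $r,n$ and the domain (Bogovskii--Ne\v{c}as); (iii) split $Eu = f+g$, set $F := \mathcal{B}f \in L^p$, $G := \mathcal{B}g \in L^q$ with the norm bounds from boundedness of $\mathcal B$; (iv) observe $E(Du - F - G) = 0$, so $Du - F - G$ is a constant skew-symmetric matrix $S$ by connectedness, giving the decomposition; (v) in the patching step, track the constants so that the $L^p$ bound only sees $\|f\|_p$ and the $L^q$ bound only sees $\|g\|_q$ — this is where the linearity of the right inverse is essential, since it lets us split before patching.

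The main obstacle I anticipate is step (i)/(v): making the localization argument respect the two-exponent structure. In the classical proof one subtracts a single infinitesimal rigid motion per patch and sums, accepting a loss that is controlled by $\|Eu\|_r$ on the whole domain; here one must arrange that the corrections coming from overlaps are themselves split into an $L^p$-piece bounded by $\|f\|_p$ and an $L^q$-piece bounded by $\|g\|_q$. Because the right inverse $\mathcal{B}$ is linear, applying it separately to $f$ and to $g$ and patching \emph{each} resulting field independently should work, but one must verify that the skew-symmetric matrices subtracted on overlapping patches can be reconciled into a \emph{single} global $S$ — this uses connectedness exactly as in the classical argument, and the difference of rigid motions on an overlap is estimated, via a finite-dimensional argument on the overlap region, by the $L^p+L^q$ norm of $Du$ minus the already-constructed $F+G$, which is zero in symmetric part, closing the loop. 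An alternative to all of this is to run the Friesecke--James--M\"uller / Conti--Schweizer proof verbatim, replacing every invocation of the $L^r$ Korn or Calder\'on--Zygmund estimate by its application to the relevant piece ($f$ in $L^p$, $g$ in $L^q$); since those proofs are already linear in the datum, the bookkeeping goes through, and the mixed estimate \eqref{kornestimate} drops out.
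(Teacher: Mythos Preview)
Your outline is in the right spirit --- the paper also exploits the linearity of Calder\'on--Zygmund operators to treat $f$ and $g$ separately, and your localization and patching steps (i), (v) match the paper's --- but step (iv) contains a genuine gap. You claim that on a reference domain there is a bounded linear right inverse $\mathcal B\colon L^r\to W^{1,r}$ of the symmetric gradient, so that with $v_1=\mathcal B f$, $v_2=\mathcal B g$ one has $E(u-v_1-v_2)=0$ and hence $D(u-v_1-v_2)$ is a constant skew matrix. This does not work on a bounded domain. A right inverse of $E$ can only act as such on tensor fields satisfying the Saint-Venant compatibility conditions; $f$ and $g$ separately need not satisfy them. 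Although $f+g=Eu$ is compatible on $\Omega$, once you localize to a ball and extend by zero in order to apply a singular-integral formula on $\R^n$ (the only concrete candidate for your $\mathcal B$), the extension $(f+g)\chi_B$ is no longer a global symmetric gradient, so $E\mathcal B((f+g)\chi_B)\ne f+g$ on $B$. The Bogovskii operator you invoke inverts the divergence, not $E$, and the Ne\v{c}as negative-norm argument for Korn does not furnish a right inverse either. So the conclusion ``$E(Du-F-G)=0$'' in your step (iv) is simply false with the $F,G$ you construct.

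What the paper does instead on each ball $B$ is to define $u_f$, $u_g$ by the Newtonian-potential formula $u_f^{(i)}=-\sum_j D_jN\ast\bigl((2f_{ij}-\delta_{ij}\Tr f)\chi_B\bigr)$, so that $\|Du_f\|_p\le c\|f\|_p$ and $\|Du_g\|_q\le c\|g\|_q$ by Calder\'on--Zygmund. The remainder $w=u-u_f-u_g$ is then not a rigid motion, but it \emph{is harmonic} on $B$, because $\Delta u=2\div Eu-D\Tr Eu$ distributionally. This is the missing idea: interior regularity for harmonic functions gives $\|Ew\|_{L^q(B_{r/2})}\le c\,\|Ew\|_{L^p(B_r)}\le c\bigl(\|f\|_p+\|g\|_q\bigr)$, and now the classical $L^q$ Korn inequality applied to $w$ produces a skew matrix $S_\ell$ with $\|Dw-S_\ell\|_{L^q}$ under control. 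One sets $F=Du_f$ and $G=Du_g+Dw-S_\ell$; the extra $\|f\|_p$ contribution in the bound for $G$ is absorbed after first reducing to the case $\|f\|_p\le\|g\|_q$. Boundary balls are handled by an extension theorem that preserves the decomposition $E\tilde u=\tilde f+\tilde g$, and the patching of the local skew matrices $S_\ell$ proceeds as you describe. Your ``alternative'' of rerunning a standard proof with separate bookkeeping for $f$ and $g$ is essentially this, but the harmonic-remainder step is not mere bookkeeping --- it is where the two exponents genuinely interact.
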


\begin{proof}
Korn's second inequality in $L^p$ states that 
for every bounded connected Lipschitz set $\Omega$ and every $p\in(1,\infty)$ there is 
constant $c(\Omega, p)$ such that 
for every $u\in W^{1,p}(\Omega;\R^n)$ 
there exists  a  skew-symmetric matrix $S\in\MM{n}{n}$ with
\begin{align}\label{eqkornlp}
\| Du-S\|_p\leq c(\Omega, p) \| Eu\|_p\,, 
\end{align}
see, e.g., \cite{Ting1971}, \cite{Nitsche1981}
or \cite[Theorem 8]{KondratevOleinik1988} for a proof. 

From this we can easily prove the assertion  in the case
 $ \|g\|_{L^q(\Omega)}\le \| f \|_{L^p(\Omega)}$.
Indeed, H\"older's inequality implies 
\begin{align*}
  \|Eu\|_{L^p(\Omega)} \le \| f \|_{L^p(\Omega)}+ \|g\|_{L^p(\Omega)} \le  
 \| f \|_{L^p(\Omega)}+ c \|g\|_{L^q(\Omega)} \le (c+1)
 \| f \|_{L^p(\Omega)}\,,
\end{align*}
and the assertion holds with $F = Du-S$ and
$G=0$. We may thus assume that
\begin{align}\label{order}
\| f \|_{L^p(\Omega)}\le \|g\|_{L^q(\Omega)}\,.
\end{align}
The proof relies on a covering argument together with a local estimate which is
based on splitting $u$ into a harmonic part and a remainder.

\medskip

\noindent
{\em Step 1: A representation of $\Delta u$}. We begin with an expression for $\Delta u$ 
in terms of $Eu$ which is frequently used in proofs of Korn's inequality, see, 
e.g.,~\cite{KondratevOleinik1988}. Suppose that $U\subset\R^n$ is open and that
$u\in W^{1,1}\loc(U;\R^n)$. 
Let $\phi\in C_c^\infty(U;\R^n)$.
Multiplication of the identity $Du:D\phi + Du:(D\phi)^T=2Eu:D\phi$
with $u$ and integration yields
 \begin{align*}
 - \int_{U} u \Delta \phi\dv{y}
=\int_{U} D u : D \phi\dv{y}
 = \int_{U}\bigl( 2 Eu : D\phi- Du:(D\phi)^T \bigr)\dv{y}\,.
\end{align*}
Partial integration transforms
the last term into $\int_U u\cdot \div ((D\phi)^T) \dv{y}=\int_U
u\cdot D\div \phi\dv{y}=-\int_U  \div u \div \phi\dv{y}$, and since $\div u=\Tr
Du=\Tr Eu$ we conclude that
 \begin{align}\label{ueqn}
 - \int_{U} u \Delta \phi\dv{y}
 = \int_{U}\bigl( 2 Eu : D\phi- (\Tr Eu) \div \phi \bigr)\dv{y}\,,
\end{align}
that is, $\Delta u = 2\div Eu-D(\Tr Eu)$ in $U$ the sense of distributions.

\medskip

\noindent
{\em Step~2: Construction of a finite cover of $\overline\Omega$.} 
We choose for every $x\in\Omega$ an $r_x>0$ such that $B(x,r_x)\subset\Omega$. 
For every $x\in \partial\Omega$ we fix an $r_x>0$ with the following properties.
There exist orthonormal vectors
$v_1,\ldots,v_{n}\in\R^n$ which  determine
a coordinate system in $\R^{n}$ and a Lipschitz function $\phi_x\colon 
 \R^{n-1}\to \R$ such that $\phi_x(0)=0$ and
\begin{align*}
B(x,r_x) \cap  \partial\Omega
&\, = \bigl\{x+ \sum_{i=1}^{n}\xi_i v_i\colon
\xi\in B(0,r_x), \xi_n=\phi_x(\xi_1,\ldots, \xi_{n-1})
\bigr\} \,, \\
B(x,r_x) \cap  \Omega
&\, = \bigl\{x+ \sum_{i=1}^{n}\xi_i v_i\colon
\xi\in B(0,r_x), \xi_n<\phi_x(\xi_1,\ldots, \xi_{n-1})
\bigr\} \,,
\end{align*}
that is, the boundary is a Lipschitz graph and the domain lies on one side of the graph.
Such a choice of a coordinate system and a Lipschitz function 
is possible since $\Omega$ has a Lipschitz boundary. 

We denote by $L$ a uniform Lipschitz constant for all the functions $\phi_x$,
$x\in\partial\Omega$ (this exists since $\partial\Omega$ is compact). 
Let $\gamma = 1/(2\sqrt{1+L^2})$. 
By construction, $\{B(x,\gamma r_x/2)\}_{x\in\overline\Omega}$ is an open cover of $\overline \Omega$.
Since $\overline \Omega$ is compact we may choose
a finite subcover $(B(x_\ell, \gamma r_\ell/2))_{\ell=0,\ldots, M}$ of $\overline\Omega$. Moreover,
the finitely many balls $B_\ell =  B(x_\ell, \gamma r_\ell/2)$ satisfy
\begin{align}\label{eqdefalpha}
 \alpha=\min\bigl\{|B_i\cap B_j\cap\Omega|: i,\,j\in\{0,\ldots, M\}\,, 
B_i\cap B_j\cap\Omega\ne\emptyset\bigr\} >0\,.
\end{align}
All constants are allowed to depend on the smallest radius of the balls in the covering.

\medskip

\noindent
{\em Step 3: Interior estimate.}
Let $N=N(z)$ denote the fundamental solution for the Laplace operator $-\Delta$. For any
$\psi\in C_c^\infty(\R^n)$ and $i$, $j\in \{1,\ldots, n\}$ the function $D_i N \ast\psi =
N\ast D_i\psi$ satisfies  $N\ast D_i\psi\in L^1\loc(\R^n)$ and the partial derivative
with respect to $x_j$ can be represented by
\begin{align*}
 D_j\bigl( N\ast D_i\psi\bigr)(x) 
= \bigl( T_{ij} \psi \bigr)(x) - \frac{\delta_{ij}}{n}\,\psi(x)
\end{align*}
where
\begin{align*}
 \bigl( T_{ij} \psi \bigr)(x)
=\lim_{\epsilon\to 0}\Bigl[ 
 \int_{\R^n\setminus B(0,\epsilon)}D_iD_j N(y)  \psi(x-y)\dv{y}\Bigr]\,.
\end{align*}
Classical results on singular integrals~\cite[Theorem~2 in Section~3.2]{Stein1970}
ensure that the limit on the right-hand side exists in $L^p$ for all $p\in
(1,\infty)$ and that the operator
$T_{ij}$ can be extended to a bounded operator from $L^p(\R^n)$ to
itself. In particular  there exists a constant $A_p$ which depends
only on $p$ such that
\begin{align*}
 \| T_{ij}f\|_p \leq A_p \| f\|_p\quad\text{ for all }f\in L^p(\R^n)\,.
\end{align*}
Analogously we define for a vector field $\psi\in C_c^\infty(\R^n;\R^n)$ the
function  
$u_\psi=\sum_{i=1}^n D_iN\ast \psi^{(i)}$ which is a weak solution of the equation
$-\Delta u=\div \psi$ 
in $\R^n$. Again, this definition can be extended by approximation to vector
fields 
$f\in L^p(\R^n;\R^n)$ and one obtains the corresponding function 
$u_f=\sum_{i=1}^n D_iN\ast f^{(i)}$ which satisfies $u_f\in L^1\loc(\R^n)$, $\|D u_f\|_p
\leq A_p \| f\|_p$ and which is a weak solution of $-\Delta u = \div f$ in the sense that
\begin{align}\label{veryweak}
 \int_{\R^n} u_f \Delta \phi \dv{x}
=\int_{\R^n} f\cdot D\phi\dv{x}\qquad \text{ for all }\phi\in C_c^\infty(\R^n)\,. 
\end{align}
After these preparations we proceed with the local estimate.
Fix a ball $B(x_\ell,r_\ell)$, $\ell\in\{0,\ldots, M\}$,  with $x_\ell\in\Omega$ and define
in view of~\bref{Eudecomp} and~\bref{ueqn}
the vector field $u_f=(u_f^{(1)}\,\ldots, u_f^{(n)})$ by
\begin{align}\label{eqdefuf}
 u_f^{(i)} = -\sum_{j=1}^nD_j N\ast\bigl(  (2f_{ij}-\delta_{ij}(\Tr f))
\chi_{B(x_\ell,r_\ell)}\bigr)\,.
\end{align}
Analogously we set
\begin{align*}
 u_g^{(i)} = -\sum_{j=1}^n D_j N\ast \bigl( (2 g_{ij}-\delta_{ij}(\Tr g))
\chi_{B(x_\ell,r_\ell)}\bigr)\,.
\end{align*}
Then $\|Du_f\|_p\leq A_p \| f\|_p$ and $\|Du_g\|_p\leq A_p \| g\|_p$. Moreover, 
$u_f$ and $u_g$ are locally weak solutions of~\bref{veryweak} in the sense that we have for all 
$\phi\in C_c^\infty( B(x_\ell,r_\ell) )$
the identities
\begin{align*}
-  \int_{\R^n} u^{(i)}_f \Delta \phi \dv{x}
=\int_{\R^n} \sum_{j=1}^n ( 2f_{ij}-\delta_{ij}(\Tr f))
D_j\phi \dv{x}%\qquad \text{ for all }\phi\in C_c^\infty(\R^n)\,. 
\end{align*}
and
\begin{align*}
 - \int_{\R^n} u^{(i)}_g \Delta \phi \dv{x}
=\int_{\R^n} \sum_{j=1}^n (2 g_{ij}-\delta_{ij}(\Tr g))
D_j\phi \dv{x}\,. %\qquad \text{ for all }\phi\in C_c^\infty(\R^n)\,. 
\end{align*}
In view of~\bref{Eudecomp} and~\bref{ueqn} we see that the function $w=u-u_f-u_g$ defines a harmonic distribution
on $B(x_\ell,r_\ell)$ which
can be identified with a smooth harmonic function by Weyl's lemma.
By Sobolev's embedding theorem and Caccioppoli estimates for harmonic functions 
we infer for the harmonic function $Ew$ that
\begin{align*}
  \|Ew\|_{L^q(B(x_\ell,r_\ell/2))} &\le c
  \|Ew\|_{L^p(B(x_\ell,r_\ell))} = c
  \|Eu-E\uf-E\ug\|_{L^p(B(x_\ell,r_\ell))} \\
&\le c \| f  + g \|_{L^p(B(x_\ell,r_\ell))} + c\|D\uf\|_{L^p(B(x_\ell,r_\ell))}
+ c\|D\ug\|_{L^p(B(x_\ell,r_\ell))}\\
&\le c  \| f  \|_{L^p(B(x_\ell,r_\ell))} +c  \|g\|_{L^q(B(x_\ell,r_\ell))} \,.
\end{align*}
In this estimate the constant may depend on $r$ and hence on $\Omega$.
By Korn's inequality in $L^q$ (see  (\ref{eqkornlp}))
applied to the set $B(x_\ell,r_\ell/2)$
there is a skew-symmetric matrix $S_\ell\in\R^{n\times n}$ such that
\begin{align*}
  \|Dw-S_\ell\|_{L^q(B(x_\ell,r_\ell/2))} \le c   \|Ew\|_{L^q(B(x_\ell,r_\ell/2))} \,.
\end{align*}
Setting $F=D\uf$ and $G=D\ug+Dw- S_\ell$ we conclude
\begin{align*}
  Du=F + G +S_\ell\qquad a.e.\text{ in }B(x_\ell,r_\ell/2)
\end{align*}
with
\begin{align*}
  \|F\|_{L^p(B(x_\ell,r_\ell/2))}\le c\| f \|_{L^p(B(x_\ell,r_\ell))}\,,\quad
  \|G\|_{L^q(B(x_\ell,r_\ell/2))}
\le c\|g\|_{L^q(B(x_\ell,r_\ell))}+c\| f \|_{L^p(B(x_\ell,r_\ell))}\,.
\end{align*}
The additional term in the estimate for 
$G$ will be replaced in the global estimate by~\bref{order}.

\medskip

\noindent
{\em Step 4: Local estimate at the boundary.}
Fix a ball $B(x_\ell,r_\ell)$, $\ell\in\{0,\ldots, M\}$, with $x_\ell\in\partial\Omega$. 
By construction, $B(x_\ell,r_\ell)$ satisfies the hypotheses of the extension
theorem (Theorem \ref{theo:extension} below)
and therefore $u$, $ f $ and $g$ 
have extensions $\uext$, $\fext$ and $\gext$
which are defined on $\Omega\cup B(x_\ell,\gamma r_\ell)$ with $E\uext =\fext+\gext$ a.e.~on 
$B(x_\ell,\gamma r_\ell)$.
Moreover, these functions satisfy the estimates~\bref{extest}.
We define $\uftilde$ and $\ugtilde$ as in Step~3
and proceed as before to obtain $S_\ell$, $\Fext$, $\Gext$ such that
\begin{align*}
  D\uext=\Fext+\Gext+S_\ell \,,\qquad a.e.\text{ in }B(x_\ell,\gamma r_\ell/2)
\end{align*}
and
\begin{align*}
  \|\Fext\|_{L^p(B(x_\ell,\gamma r_\ell/2))}&\, \le c\|\fext\|_{L^p(B(x_\ell,\gamma r_\ell))}
\le c\| f \|_{L^p(B(x_\ell,r_\ell)\cap\Omega)}\,,\\
  \|\Gext\|_{L^q(B(x_\ell,\gamma r_\ell/2))}
&\,\le c\|\gext\|_{L^q(B(x_\ell,\gamma r_\ell))}+c\|\fext\|_{L^p(B(x_\ell,\gamma r_\ell))}\\
&\, \le c\|g\|_{L^q(B(x_\ell,r_\ell)\cap\Omega)}+c\| f \|_{L^p(B(x_\ell,r_\ell)\cap\Omega)}\,.
\end{align*}
We define $F$ and $G$ to be the restrictions of $\Fext$ and $\Gext$ to 
$B(x_\ell,\gamma r_\ell/2)\cap\Omega$.

\medskip

\noindent
{\em Step 5: Global estimate.}
Let $S_i$, $F_i$, $G_i$, $i=0, \ldots, M$,  
be the matrices and fields in the balls $B(x_i, \gamma r_i/2)$ which
were constructed in Steps~3 and~4, respectively. In order to prove the assertion, we need
to verify that we may choose $S_0$ globally in $\Omega$. Therefore
we estimate $|S_i-S_j|$. If $B_i\cap B_j\cap \Omega\ne \emptyset$, then
recalling (\ref{eqdefalpha}) we obtain
\begin{align*}
\alpha  |S_i-S_j|
&\, \leq \int_{B_i\cap B_j\cap\Omega}|S_i-S_j|\dv{y}  \\
&\, \leq \int_{B_i\cap\Omega}|S_i-Du|\dv{y} + \int_{B_j\cap\Omega}|S_j-Du|\dv{y}
\le c \| f \|_{L^p(\Omega)} +c\|g\|_{L^q(\Omega)}\,.
\end{align*}
Since $\Omega$ is connected and the subcover consists of finite number of balls, 
we infer
\begin{align*}
  |S_i-S_0|\le c \| f \|_{L^p(\Omega)} +c\|g\|_{L^q(\Omega)}
\end{align*}
for all $i=1,\ldots, M$ where $c$ depends only on $p$, $q$, $n$ and
$\Omega$. We define  inductively
a family $A_i$, $i=0,\ldots, M$, of pairwise disjoint and measurable sets by $A_0=B_0\cap\Omega$ and
\begin{align*}
 A_i =\Bigl(  B_i\setminus \bigcup_{j=0}^{i-1} A_j\Bigr)\cap\Omega\,,\quad i\geq 1\,,
\end{align*}
which covers $\Omega$ up to a set of measure zero and set
\begin{align*}
F=\sum_{i=0 }^M F_i \chi_{A_i}\,,\hskip1cm
G=\sum_{i=0 }^M (G_i + S_i-S_0) \chi_{A_i}\,.
\end{align*}
We obtain the decomposition~\bref{korndecomposition} and the corresponding estimates, i.e.,
\begin{align*}
  Du-S_0= F+ G \,,\hskip5mm
  \|F\|_{L^p(\Omega)}\le c\| f \|_{L^p(\Omega)}\,,\hskip5mm
  \|G\|_{L^q(\Omega)}\le c\|g\|_{L^q(\Omega)}+c\| f \|_{L^p(\Omega)}\,.
\end{align*}
The assertion follows from~\bref{order}. The proof is now complete.
\end{proof}

%%%%%%%%%%%%%%%%%%%%%%%%%%%%%%%%%%%%%%%%%%%%%%%%%%%%%%%%%%%%%%%%%%%%%%%%%%
%%                                                                      %%
%%   N  O  N  L  I  N  E  A  R                                          %%
%%                                                                      %%
%%%%%%%%%%%%%%%%%%%%%%%%%%%%%%%%%%%%%%%%%%%%%%%%%%%%%%%%%%%%%%%%%%%%%%%%%%

\section{Nonlinear estimate: geometric rigidity and proof of Theorem~\ref{theo:rigiditypqnonlinear}}
\label{secnonlinear}

In this section we prove Theorem~\ref{theo:rigiditypqnonlinear}.
We start from the case that $u$ is globally Lipschitz continuous. 
The general case will be reduced to this situation via a truncation argument
which provides a Lipschitz constant which depends only on $n$ and $\Omega$.

\begin{lemma}\label{lemmalipschitzcase}
Theorem~\ref{theo:rigiditypqnonlinear}   holds under  the additional assumption that
there exits a constant $M>n$ so that  $|D u|\le M$ a.e. The constant 
in the estimates~(\ref{thmestimates}) depends on $M$. 
\end{lemma}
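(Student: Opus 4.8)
The plan is to deduce the lemma from the linear estimate, Theorem~\ref{theokorn}, by linearising around a single rotation; the uniform bound $|Du|\le M$ enters precisely in order to make the linearisation error not only quadratically small but also uniformly bounded. First I would dispose of trivial reductions. Since $|Du|\le M$ forces $\dist(Du,\SO(n))=f+g\le M+\sqrt n=:K$ a.e., I replace $f$ and $g$, as in~\eqref{getdecomposition} applied to the bound $\dist(Du,\SO(n))\le|f|+|g|$, by nonnegative functions with $0\le f,g\le K$ and unchanged (or smaller) norms. If $\|g\|_q\le\|f\|_p$, then H\"older's inequality gives $\|\dist(Du,\SO(n))\|_p\le\|f\|_p+\|g\|_p\le c\|f\|_p$, and the classical single-exponent geometric rigidity estimate (the case $g=0$, proved for $p=2$ in~\cite{FrieseckeJamesMueller2005} and for general $p$ in~\cite{ContiSchweizer2006b}) yields $Q\in\SO(n)$ with $\|Du-Q\|_p\le c\|f\|_p$, so the lemma holds with $F=Du-Q$ and $G=0$. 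Hence I may assume $\|f\|_p\le\|g\|_q$.

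Since $\dist(Du,\SO(n))\in L^\infty\subset L^p$, the classical estimate in $L^p$ gives $Q\in\SO(n)$ with $\|Du-Q\|_{L^p(\Omega)}\le c\|\dist(Du,\SO(n))\|_{L^p(\Omega)}\le c\|f\|_p+c\|g\|_q$. I set $w=Q^Tu-\mathrm{id}$, so that $Dw=Q^TDu-I$ and $|Dw|\le M+\sqrt n$, and write the polar decomposition $Du=R(x)U(x)$ with $U=(Du^TDu)^{1/2}\ge0$ and $R(x)\in\SO(n)$ measurable, so that $|U-I|=\dist(Du,\SO(n))=f+g$. With $\hat R:=Q^TR(x)\in\SO(n)$ and the identity $(\hat R-I)(\hat R-I)^T=-2\sym(\hat R-I)$ one gets
\begin{equation*}
Ew=\sym(\hat RU-I)=\sym\!\bigl(\hat R(U-I)\bigr)-\tfrac12(\hat R-I)(\hat R-I)^T ,
\end{equation*}
and, since $|\hat R-I|\le|\hat RU-I|+|\hat RU-\hat R|=|Du-Q|+(f+g)$, this yields the pointwise bound
\begin{equation*}
|Ew|\le(f+g)+c\,(f+g)^2+c\,|Du-Q|^2\qquad a.e.\text{ in }\Omega .
\end{equation*}

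Next I would rewrite this right-hand side as $\tilde f+\tilde g$ with $\|\tilde f\|_p\le c\|f\|_p$ and $\|\tilde g\|_q\le c\|g\|_q$. Here the bound $|Du|\le M$ is essential: from $0\le f,g\le K$ one has $f^2\le Kf$, $g^2\le Kg$ and $fg\le Kf$, so $(f+g)+(f+g)^2$ admits such a splitting; and from $|Du-Q|\le K$ one has $|Du-Q|^2\le K|Du-Q|$, so by~\eqref{getdecomposition} it suffices to bound $|Du-Q|$ itself by the sum of an $L^p$ function of norm $\le c\|f\|_p$ and an $L^q$ function of norm $\le c\|g\|_q$; this I would obtain from the classical rigidity estimates in $L^p$ and in $L^q$ together with the pointwise inequality $|Du-Q|\le\dist(Du,\SO(n))+|R(x)-Q|$ and the splitting of $\dist(Du,\SO(n))=f+g$. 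Theorem~\ref{theokorn} applied to $w$ then produces a skew-symmetric matrix $S$ and fields $F_0\in L^p(\Omega;\R^{n\times n})$, $G_0\in L^q(\Omega;\R^{n\times n})$ with $Dw=S+F_0+G_0$, $\|F_0\|_p\le c\|f\|_p$ and $\|G_0\|_q\le c\|g\|_q$.

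Finally I would absorb $S$ into the rotation. The averages $\tfrac1{|\Omega|}\int_\Omega|Dw|$ and $\tfrac1{|\Omega|}\int_\Omega|Dw-S|=\tfrac1{|\Omega|}\int_\Omega|F_0+G_0|$ are both $\le c\|f\|_p+c\|g\|_q\le c\|g\|_q$ (using $\|f\|_p\le\|g\|_q$), hence $|S|\le c\|g\|_q$ and the constant matrix $QS$ satisfies $\|QS\|_{L^q(\Omega)}=|S|\,|\Omega|^{1/q}\le c\|g\|_q$. From $Du=Q+QS+QF_0+QG_0$ and $|QA|=|A|$ one obtains $|Du-Q|\le|F_0|+(|S|+|G_0|)$, and~\eqref{getdecomposition} then yields $F\in L^p$, $G\in L^q$ with $Du=Q+F+G$, $\|F\|_p\le c\|f\|_p$ and $\|G\|_q\le c\|g\|_q$, all constants depending only on $n,p,q,\Omega$ and $M$. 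The main obstacle is the step described in the third paragraph: controlling the quadratic linearisation error $|Du-Q|^2$ compatibly with the $L^p+L^q$ splitting of $f+g$, which is exactly where the uniform Lipschitz bound is used; everything else is a direct computation together with Theorem~\ref{theokorn} and the classical single-exponent rigidity estimate.
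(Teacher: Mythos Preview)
Your argument has a genuine gap in the third paragraph, and you yourself flag it as ``the main obstacle''. You need to write $|Du-Q|^2$, or equivalently (using $|Du-Q|\le K$) the function $|Du-Q|$ itself, as a sum $h_1+h_2$ with $\|h_1\|_p\le c\|f\|_p$ and $\|h_2\|_q\le c\|g\|_q$. But this decomposition of $|Du-Q|$ is, up to the trivial conversion~\eqref{getdecomposition}, precisely the conclusion of the lemma. Your proposed justification --- classical rigidity in $L^p$ and in $L^q$ plus the pointwise bound $|Du-Q|\le\dist(Du,\SO(n))+|R(x)-Q|$ --- does not deliver it: the single-exponent estimates produce in general \emph{different} rotations $Q_p$ and $Q_q$, and comparing them only yields $|Q_p-Q_q|\le c(\|f\|_p+\|g\|_q)$, so for your fixed $Q$ one gets $\|Du-Q\|_q\le c\|f\|_p^{p/q}+c\|g\|_q$. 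Under your reduction $\|f\|_p\le\|g\|_q$ the term $\|f\|_p^{p/q}$ is \emph{not} dominated by $\|g\|_q$ when both are small (since $p/q<1$), so this bound cannot serve as the $L^q$ piece. The term $|R(x)-Q|$ in your pointwise inequality is equally uncontrolled: the only available estimate is $|R(x)-Q|\le\dist(Du,\SO(n))+|Du-Q|$, which is circular.

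The paper resolves exactly this difficulty by a two-step bootstrap that your sketch omits. First one treats the restricted range $q\le 2p$: here one uses $L^q$-rigidity to obtain $\|Du-Q\|_q^q\le c\|f\|_p^p+c\|g\|_q^q$, reduces (by the complementary ordering $\|g\|_q^q\le\|f\|_p^p$) to $\|Du-Q\|_q^q\le c\|f\|_p^p$, and then, crucially, uses $2p\ge q$ together with $|Du-Q|\le K$ to get $\||Du-Q|^2\|_p^p=\int|Du-Q|^{2p}\le c\int|Du-Q|^q\le c\|f\|_p^p$. Thus the quadratic error goes \emph{entirely} into the $L^p$ piece, and Theorem~\ref{theokorn} finishes this case. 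Second, for $q>2p$ one cannot absorb $|Du-Q|^{2p}$ into $|Du-Q|^{q}$; instead one argues by induction on $k$ with $(p,q)\in\Lambda_k=\{2^kp<q\le 2^{k+1}p\}$, applying the already-proven case $(2p,q)\in\Lambda_{k-1}$ to obtain a preliminary decomposition $Du=Q+F+G$ with $\|F\|_{2p}\le c\|f\|_p^{1/2}$, so that $\||F|^2\|_p=\|F\|_{2p}^2\le c\|f\|_p$, and similarly for $G$. Without this case distinction and induction, the quadratic linearisation error cannot be split as you require.
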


\begin{proof}
The proof is based on a suitable linearization at the identity. In order to control the
higher order terms we first assume that $q$ is not much larger than $p$.
In the following, all constants may depend on $n$, $p$, $q$, $M$, and $\Omega$.

We may assume that $0\le f,g\le 2M$. Indeed, if this
is not the case we replace $f$ and $g$ by $f'=\min\{|f|, 2M\}$ and 
$g'=\min\{|g|, 2M\}$, respectively, and estimate $\dist(Du, \SO(n))\le
\min\{ f+g, |Du|+\sqrt n\}\le \min\{|f|+|g|, M+\sqrt n\}\le f'+g'$.

\medskip

\noindent
{\em Step 1. Small $q$.}
Assume first that $q\le 2p$. 
We observe that the assumption $|f|\leq 2M$ implies $\|f\|_q\le c \|f\|_p^{p/q}$.
  By the rigidity estimate in $L^q$, see
  \cite[Th. 3.1]{FrieseckeJamesMueller2005} and \cite[Sect. 2.4]{ContiSchweizer2006b}, 
there exists a $Q\in \SO(n)$ such that in view of~\bref{estimatedist}
\begin{align}\label{contirigidity}
    \|D u-Q\|_q \le c \|\dist(D u, \SO(n))\|_q \le c \|f\|_q
    + c\|g\|_q \le c \|f\|_p^{p/q}
    + c\|g\|_q  \,.
 \end{align}
If $ \|f\|_p^{p} \le \|g\|_q^q$, then the assertion follows with $F=0$ and $G=Du-Q$.
We may thus assume that
\begin{align}\label{fgorder}
 \|f\|_p^{p} \ge \|g\|_q^q
\end{align}
and compose $u$ with a rotation so that $Q=\Id$. We expand
 the distance to $\SO(n)$ in the identity matrix  and  obtain the pointwise estimate
  \begin{equation}\label{eqtaylor}
    |Eu-\Id|\le c\, \dist(D u, \SO(n)) + c |D u
    -\Id|^2 \qquad a.e.\text{ in }\Omega\,.
  \end{equation}
Observe that in view of the $L^\infty$  bound on $|Du|$
and the condition $q\le 2p$
\begin{align}\label{expansion}
    \left\| |D u -\Id|^2 \right\|_p^p = \int_\Omega |D u-\Id|^{2p} \dv{x}
    \le c \int_\Omega |D u-\Id|^q  \dv{x}\,, 
\end{align}
and that~\bref{expansion},~\bref{contirigidity}, and~\bref{fgorder}
imply
\begin{equation*}
  \left\| |D u -\Id|^2 \right\|_p^p \le  c \|f\|_p^{p}
    + c\|g\|_q^q \le c \|f\|_p^{p} \,.
\end{equation*}
Let $ \widetilde f =f+ |D u -\Id|^2$, $\widetilde g =g$. By the foregoing estimate,
$ \|\widetilde f \|_p \le c \|f\|_p$, and 
(\ref{eqtaylor}) gives
\begin{equation*}
  |Eu-\Id|\le c \widetilde f   + \widetilde g  \,.
\end{equation*}
The assertion follows now from Theorem \ref{theokorn}. 
Indeed, there exists a skew-symmetric matrix $S$ and matrix fields $\Ftilde$ and
$\Gtilde$ such that $D u -\Id = S+ \Ftilde +\Gtilde$ with $\|\Ftilde\|_p\leq c\|\widetilde f\|_p
\leq c\|f\|_p$
and $\|\Gtilde\|_q\leq c\|\widetilde g\|_q=c\|g\|_q$. Let $Q\in \SO(n)$ be a proper rotation such that
$|\Id+S-Q|=\dist(\Id+S, \SO(n))$. Then 
\begin{align*}
 |\Id+S - Q | 
\leq |\Id+S-D u| + \dist(D u,\SO(n))
\leq |\Ftilde|+|\Gtilde| + |f| + |g|\,.
\end{align*}
Combining with the previous estimates we obtain
$|\Id+S - Q | \le c\|f\|_p+ c\|g\|_q$.
If $\|f\|_p\le \|g\|_q$ we set $F=\Ftilde$, $G=\Gtilde+Q-\Id -S$, otherwise
 $F=\Ftilde+Q-\Id -S$, $G=\Gtilde$.

\medskip

\noindent
{\em Step 2: Large $q$.} Let 
\begin{equation*}
  \Lambda_k=\{(p,q)\in(1,\infty)^2: 2^kp<q\leq 2^{k+1}p\}\,.
\end{equation*}
We shall prove by induction on $k\in\N$ the following assertion.
For every $(p,q)\in \Lambda_k$ and every $u$ as in the statement 
 there
exist a rotation $Q\in\SO(n)$ and matrix fields $F\in L^p(\Omega;\MM{n}{n})$ 
and $G\in L^q(\Omega;\MM{n}{n})$ such that $Du = Q+F+G$ a.e., 
\begin{align}\label{inductionestimate}
 \| F\|_{L^p(\Omega)} \leq c_k  \| f\|_{L^p(\Omega)}\,,\qquad
 \| G\|_{L^q(\Omega)} \leq c_k  \| g\|_{L^q(\Omega)}\,,
\end{align}
and\begin{align}\label{uniformbound}
|F|\leq M+\sqrt{n}\,,\qquad|G|\leq M+\sqrt{n}\,,
\end{align}
where the constant $c_k$ depends only on $n$, $p$, $q$, $M$, $k$ and $\Omega$.
Notice that it suffices to prove (\ref{inductionestimate}), then
(\ref{uniformbound}) follows. Indeed, 
let $A=\{x\in \Omega\colon |F|>M+\sqrt{n}\}$ and
$B=\{x\in \Omega\colon |G|>M+\sqrt{n}\}$. Then it suffices to replace $F$ and
$G$ by 
\begin{align*}
 F'= \chi_{\Omega\setminus A}F+\chi_A(Du-Q)\,,\qquad
G'= \chi_{\Omega\setminus(A\cup B)}G+\chi_{B\setminus A}(Du-Q)\,,
\end{align*}
respectively.

The case
$k=0$ has been verified in Step~1. Suppose thus that the assertion has been proven for
$k\geq 0$ and that  $(p,q)\in \Lambda_{k+1}$. Then 
 $(2p,q)\in \Lambda_k$ and by assumption there exist
a rotation $Q$ and matrix fields $F$ and $G$ which satisfy $Du = Q+F+G$
and the estimate~\bref{inductionestimate}, that is,
\begin{equation}\label{reductionnorms}
  \|F\|_{{2p}} \le c\|f\|_{{2p}} \le c\|f\|_{{p}}^{1/2}\,,\hskip1cm 
  \|G\|_{q} \le c\|g\|_{q}\,.
\end{equation}
As above, we may assume that $Q=\Id$ and use the Taylor series~\bref{eqtaylor} to 
obtain the estimates
\begin{equation*}
  |E u-\Id| \le c f+c g+c |F|^2 + c|G|^2\qquad a.e.\text{ in }\Omega
\end{equation*}
and in view of~\bref{uniformbound} and~\bref{reductionnorms}
\begin{equation*}
  \|\,|F|^2\,\|_p = \|F\|_{2p}^2 \le c \|f\|_p \,,\hskip1cm
  \|\,|G|^2\,\|_q \le c \|G\|_q\le c\|g\|_{q}\,.
\end{equation*}
Therefore the assertion follows  from Theorem \ref{theokorn}
with $\widetilde f =f+|F|^2$, $\widetilde g =g+|G|^2$.
\end{proof}

In order to prove Theorem   \ref{theo:rigiditypqnonlinear}  we make use of a
well-known truncation result.

\begin{theorem}\label{theo:truncation}
  Let $\Omega\subset\R^n$ be a bounded Lipschitz domain, $m\geq 1$. Then there
  is a constant $\setc{eg}=\const{eg}(\Omega)\geq 1$ such that for all $u\in W^{1,1}(\Omega;\R^m)$
  and all $\lambda>0$ there exists  a measurable set $E\subset\Omega$ such that:
  \begin{enumerate}
  \item $u$ is $\const{eg}\lambda$-Lipschitz on $E$\,;
  \item\label{theo:truncation2} $\displaystyle|\Omega\setminus E|\le
    \frac{\const{eg}}{\lambda}\int_{\{ |Du|>\lambda\}} |Du| \dv{x}$.
%\item $\Omega\setminus E\subseteq\bigl\{ x\in \Omega,\, M(|Du|)(x)\geq
%\lambda\bigr\}$  
%where $M$ denotes a local maximal function operator,
%\begin{align*}
% M(|Du|)(x) = \sup\Bigl\{ \frac{1}{|B(x,r)|}\int_{B(x,r)} |Du(y)|\dv{y}\,,B(x,r%)\subset\Omega, \, r>0
%\Bigr\}\,.
%\end{align*}

  \end{enumerate}
\end{theorem}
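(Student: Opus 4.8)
\textit{Approach.} This is a classical Lipschitz-truncation result, going back to Acerbi--Fusco and Liu (see also \cite[Sect.~6.6.3]{} --- I will instead sketch a self-contained proof). The plan is to build $E$ as a superlevel set of a \emph{restricted} Hardy--Littlewood maximal function of $|Du|$, and then to read off property~(i) from a pointwise Poincaré-type estimate and property~(ii) from the weak-type $(1,1)$ bound. For $h\in L^1(\Omega)$ and $x\in\Omega$ set
\begin{equation*}
  (M_\Omega h)(x)=\sup_{\rho>0}\frac{1}{|B(x,\rho)|}\int_{B(x,\rho)\cap\Omega}|h|\dv{y}
  = \bigl(M(h\chi_\Omega)\bigr)(x),
\end{equation*}
so that the centred maximal operator $M$ on $\R^n$ controls $M_\Omega$ and in particular $M_\Omega$ is of weak type $(1,1)$ with a dimensional constant.

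\textit{Key step: a pointwise estimate.} The heart of the matter is to show that there is $C=C(\Omega)$ such that for a.e.\ $x,y\in\Omega$,
\begin{equation}\label{plan:pw}
  |u(x)-u(y)|\le C\,|x-y|\,\bigl((M_\Omega|Du|)(x)+(M_\Omega|Du|)(y)\bigr).
\end{equation}
At a Lebesgue point $x$ with $B(x,r)\subset\Omega$, a standard telescoping over the balls $B(x,2^{-j}r)$ together with Poincaré's inequality on each ball gives $|u(x)-u_{B(x,r)}|\le C\,r\,(M_\Omega|Du|)(x)$; comparing the averages of $u$ over two overlapping such balls handles the case in which a fixed multiple of $B(x,|x-y|)$ and $B(y,|x-y|)$ both lie in $\Omega$. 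For the remaining configurations (points near $\partial\Omega$, or far apart) one uses that a bounded Lipschitz domain is a John (uniform) domain, hence satisfies a Boman chain condition: $x$ and $y$ can be joined by a chain of balls $B^{(1)},\dots,B^{(N)}\subset\Omega$ with controlled overlap and $\sum_k \operatorname{diam} B^{(k)}\le C(\Omega)\,|x-y|$, on which the telescoping argument applies on each link. Establishing \eqref{plan:pw} on a general Lipschitz domain is the main obstacle; everything else is routine.

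\textit{Conclusion.} Given $\lambda>0$, apply the construction at level $2\lambda$: put $E=\{x\in\Omega:(M_\Omega|Du|)(x)\le 2\lambda\}$ (discarding a null set of non-Lebesgue points). By \eqref{plan:pw}, $u$ restricted to $E$ is $\bigl(2C(\Omega)\lambda\bigr)$-Lipschitz, which is~(i). For~(ii), write $|Du|=|Du|\chi_{\{|Du|\le\lambda\}}+|Du|\chi_{\{|Du|>\lambda\}}$; the first term contributes at most $\lambda$ to $M_\Omega|Du|$ everywhere, so
\begin{equation*}
  \Omega\setminus E\subseteq\bigl\{M_\Omega\bigl(|Du|\chi_{\{|Du|>\lambda\}}\bigr)>\lambda\bigr\},
\end{equation*}
and the weak-type $(1,1)$ bound for $M_\Omega$ yields $|\Omega\setminus E|\le \tfrac{C_n}{\lambda}\int_{\{|Du|>\lambda\}}|Du|\dv{x}$. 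Taking $\const{eg}=\max\{2C(\Omega),\,C_n,\,1\}$ gives both statements with the same constant (note $\const{eg}\ge1$ and depends only on $\Omega$, not on $u$, $\lambda$, or $m$). Alternatively, one may reduce to $\Omega=\R^n$ by a Sobolev extension and invoke the standard Euclidean version; the restricted maximal function is only needed to get the sharp localized integral $\int_{\{|Du|>\lambda\}}|Du|$ rather than the full $W^{1,1}$ norm on the right-hand side of~(ii).
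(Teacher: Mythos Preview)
Your proposal is correct and follows exactly the standard route the paper invokes: the paper gives no self-contained argument, citing only \cite[Prop.~A.1]{FrieseckeJamesMueller2005} and \cite[Sect.~6.6.3]{EvansGariepy1992}, and the maximal-function construction with the pointwise telescoping inequality you outline is precisely the method of those references. Your splitting $|Du|=|Du|\chi_{\{|Du|\le\lambda\}}+|Du|\chi_{\{|Du|>\lambda\}}$ combined with the weak-$(1,1)$ bound is the right device to obtain the localized integral on the right-hand side of~(ii).
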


\begin{proof}
This result corresponds essentially to Proposition~A.1 in~\cite{FrieseckeJamesMueller2005} 
and follows from the
  same proof. The techniques are analogous to the proof in the case $\Omega=\R^n$ 
in Section~6.6.3 in~\cite{EvansGariepy1992}.
\end{proof}

\begin{proof}[Proof of Theorem~\ref{theo:rigiditypqnonlinear}]
Suppose first that  $\|g\|_q\le \|f\|_p$. Then by~\bref{estimatedist}
\begin{align*}
  \|\dist(D u, \SO(n))\|_p\le 
\|g\|_p+ \|f\|_p \le 
c \|g\|_q+ \|f\|_p \le (c+1)\|f\|_p
\end{align*}
and the result follows from the geometric rigidity estimates in
$L^p$ with $F=Du-Q$ and  $G=0$ for a suitable $Q\in\SO(n)$.

We may thus assume that $\|g\|_q\ge \|f\|_p$. In order to apply Lemma~\ref{lemmalipschitzcase}
we choose in Theorem~\ref{theo:truncation} $\lambda=2n$ and obtain a measurable set $E$ such that
$u$ is Lipschitz continuous on $E$ with Lipschitz constant $M=2n\const{eg}$.
Let $u_M$ be a Lipschitz-extension of $u|_E$ to $\Omega$ with the same 
Lipschitz constant which exists according to Kirszbraun's 
Theorem~\cite[Section~2.10.43]{Federer1996}. 
Then $u_M$ is $M$-Lipschitz and $u_M=u$ on $E$.
We define
 \begin{equation*}
f_M=f \text{ and } g_M=
g+2M\chi_{\Omega\setminus E} \text{ if } \|f\|_p^p\le \|g\|_q^q
\end{equation*}
and
 \begin{equation*}
f_M=f+2M\chi_{\Omega\setminus E} \text{ and } g_M=
g \text{ if }  \|g\|_q^q<\|f\|_p^p
\end{equation*}
and assert that
\begin{equation}\label{claimuM}
  \dist(Du_M,\SO(n)) \le f_M + g_M \text{ a.e. in }\Omega\,.
\end{equation}
For almost every  $x\in E$ we have $Du_M=Du$, $f_M=f$ and $g_M=g$, hence 
(\ref{claimuM}) holds. For almost every $x\in\Omega\setminus E$ we have
\begin{equation*}
    \dist(Du_M,\SO(n))(x)\le |M|+\sqrt n \le 2M \le 2M\chi_{\Omega\setminus
      E}(x) \le  (f_M+g_M)(x)\,.
\end{equation*}
This proves (\ref{claimuM}).

We now assert that
\begin{equation}\label{claimtildefp}
  \|f_M\|_p \le C(\Omega,p) \|f\|_p\,, \text{ and }
  \|g_M\|_q \le C(\Omega,q) \|g\|_q\,.
\end{equation}
To see this we first estimate $|\Omega\setminus E|$. We have for $A\in \R^{n\times n}$ with 
$ |A|\ge   2n$
\begin{equation*}
  |A| \le \sqrt{n}+\dist(A, \SO(n)) \le 2\,\dist (A, \SO(n))\,. 
\end{equation*}
Therefore
\begin{equation*}
  |Du|\le \max\{4f,4g\} \text{ if } |Du|\ge \lambda=2n\,.
\end{equation*}
Together with Theorem \ref{theo:truncation}\ref{theo:truncation2}
this yields 
\begin{alignat*}1
  |\Omega\setminus E| &\le \frac{c_1}{\lambda} \int_{\{|Du|>\lambda\}}
  |Du| 
  \dv{x} \\
& \le\frac{c_1}{\lambda}  \int_{\{4f\ge \lambda\}} 4 f  \dv{x} +
\frac{c_1}{\lambda} \int_{\{4g\ge \lambda\}} 4 g  \dv{x} \\
&\le \frac{4^p c_1}{\lambda^p} \int_\Omega f^p \dv{x}+
\frac{4^q c_1}{\lambda^q} \int_\Omega g^q \dv{x}
\le c \left( \|f\|_p^p+\|g\|_q^q \right) \,.
\end{alignat*}
If $\|f\|_p^p\le \|g\|_q^q$ then 
 $\|2M\chi_{\Omega\setminus E} \|_q^q 
\le (2M)^q 2c \|g\|_q^q$ and therefore $\|g_M\|_q\le c \|g\|_q$. 
The other case is analogous. This concludes the proof of 
 (\ref{claimtildefp}).

It follows from  (\ref{claimuM}), (\ref{claimtildefp})  and Lemma
\ref{lemmalipschitzcase} that there exist an $R\in\SO(n)$ and matrix fields
$ F_M$, $G_M$ such that
\begin{equation}\label{equmtildefg}
  Du_M-R= F_M+ G_M,\hskip2mm
  \| F_M\|_p \le c\| f_M\|_p \le c\|f\|_p\,,\hskip2mm
  \| G_M\|_q \le c\|g_M\|_q \le c\|g\|_q\,.
\end{equation}
Now $|Du-Du_M|\le |Du|+M\le \dist(Du, \SO(n)) + \sqrt{n}+M$ almost everywhere on $\Omega$ and $Du=Du_M$
almost everywhere on $E$. Thus $|Du-Du_M|\le  f_M+g_M$ and $|Du-R|\le 
f_M+ |F_M|+|G_M|+g_M$ and the assertion follows from
(\ref{claimtildefp}), (\ref{equmtildefg}) and (\ref{getdecomposition}).
\end{proof}

\section{Applications and extensions}
\label{weakspaces}
\subsection{Estimates in Lorentz  spaces}
As an application of our estimates in Section~\ref{secnonlinear} we present rigidity results
in the Lorentz spaces $L^{p,q}$
for $p\in (1,\infty)$, $q\in[1,\infty]$, see 
\cite{Peetre1963,Hunt1966,ButzerBerens1967,Lunardi2009,TartarInterp}. 
For $q=\infty$ they coincide with  the weak $L^p$ spaces.
%According to~\cite[Theorem 3.3.9]{ButzerBerens1967}
%(see also \cite{TartarInterp} or \cite[Ex. 1.3.9]{Lunardi2009}) 
The Lorentz space $L^{p,q}(\Omega)$ is equal to the 
real interpolation space which is constructed with the $K$-functional, 
\begin{align*}
 L^{p,q}(\Omega)=\bigl( L^{p_1}(\Omega), L^{p_2}(\Omega)\bigr)_{\theta, q}
\end{align*}
where the  $K$ functional is given by
\begin{align*}
 K(w, t) = \inf \left\{ \| f\|_{p_1}+ t \|g\|_{p_2}:
w=f+g,\, f\in L^{p_1},\, g\in L^{p_2}\right\}
\end{align*}
and
\begin{align}\label{interpolationcondition}
1\leq p_1<p_2\leq \infty, \, 1\leq q\leq \infty, \, \theta\in (0,1),\,
\frac{1}{p}=\frac{1-\theta}{p_1}+\frac{\theta}{p_2}\,.
\end{align}
The norm is given for $q<\infty$ by 
\begin{equation*}
  \|w\|_{p,q} =\left(\int_{(0,\infty)} (t^{-\theta} K(w,t))^q \frac{\dv{t}}{t}
  \right)^{1/q} \,,
\end{equation*}
in the special case $q=\infty$ by

\begin{align*}
\|w\|_{\theta,\infty} = \sup_{t>0} t^{-\theta} K(w, t)\,.
\end{align*}
We remark that different choices of $\theta, p_1, p_2$ which satisfy
(\ref{interpolationcondition})  give equivalent norms. 
In this framework we obtain the following result.
\begin{corollary}
 Suppose that $p\in(1,\infty)$, $q\in[1,\infty]$. Then there exists a constant
 $c$ which only depends on $p$, $n$ 
and $\Omega$ such that the following assertion is true. If $u\in
W^{1,1}(\Omega;\R^n)$ with  
$\dist(Du,\SO(n))\in L^{p,q}(\Omega;\MM{n}{n})$, then there 
exists a rotation $Q\in \SO(n)$ such that
\begin{align*}
 \| Du - Q\|_{p,q} \leq c\| \dist(Du,\SO(n))\|_{p,q}\,.
\end{align*}
\end{corollary}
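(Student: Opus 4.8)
The plan is to use the definition of $L^{p,q}(\Omega)$ as a real interpolation space and to apply the nonlinear rigidity estimate of Theorem~\ref{theo:rigiditypqnonlinear} to a near-optimal decomposition of $w:=\dist(Du,\SO(n))$ at every scale $t>0$. Fix auxiliary exponents $p_1\in(1,p)$ and $p_2\in(p,\infty)$ together with the corresponding $\theta\in(0,1)$ as in \bref{interpolationcondition}; since different admissible choices yield equivalent norms, it suffices to establish the estimate for this one. Note that $w\in L^{p,q}(\Omega)\subset L^{p_1}(\Omega)+L^{p_2}(\Omega)$, and that, because $|\Omega|<\infty$ and $p_1<p_2$, Hölder's inequality gives $\|v\|_{L^{p_1}(\Omega)}\le c\|v\|_{L^{p_2}(\Omega)}$ for every $v\in L^{p_2}(\Omega)$; this will be used to absorb $L^{p_2}$-norms into $L^{p_1}$-norms.

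First I would pin down the rotation. Choosing a decomposition $w=f+g$ with $\|f\|_{p_1}+\|g\|_{p_2}\le 2K(w,1)$ and applying Theorem~\ref{theo:rigiditypqnonlinear} with the pair $(p_1,p_2)$ in place of $(p,q)$, I obtain $Q\in\SO(n)$ and matrix fields $F\in L^{p_1}$, $G\in L^{p_2}$ with $Du-Q=F+G$, $\|F\|_{p_1}\le c\|f\|_{p_1}$ and $\|G\|_{p_2}\le c\|g\|_{p_2}$, hence $\|Du-Q\|_{L^{p_1}(\Omega)}\le c\,K(w,1)$. This $Q$ is the rotation claimed in the statement.

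For an arbitrary $t>0$ I would then choose $w=f_t+g_t$ with $\|f_t\|_{p_1}+t\|g_t\|_{p_2}\le 2K(w,t)$ and apply Theorem~\ref{theo:rigiditypqnonlinear} once more, obtaining $Q_t\in\SO(n)$ and $F_t,G_t$ with $Du-Q_t=F_t+G_t$, $\|F_t\|_{p_1}\le c\|f_t\|_{p_1}$, $\|G_t\|_{p_2}\le c\|g_t\|_{p_2}$. Writing $Du-Q=F_t+G_t+(Q_t-Q)$ and assigning the constant matrix $Q_t-Q$ either to the $L^{p_1}$- or to the $L^{p_2}$-component of the decomposition gives $K(Du-Q,t)\le\|F_t\|_{p_1}+t\|G_t\|_{p_2}+c\,|Q_t-Q|\min\{1,t\}\le c\,K(w,t)+c\,|Q_t-Q|\min\{1,t\}$. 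To bound $|Q_t-Q|$ I would use $|\Omega|^{1/p_1}|Q_t-Q|=\|Q_t-Q\|_{L^{p_1}(\Omega)}\le\|Du-Q\|_{L^{p_1}(\Omega)}+\|Du-Q_t\|_{L^{p_1}(\Omega)}$, the estimate $\|Du-Q_t\|_{L^{p_1}(\Omega)}\le c\|f_t\|_{p_1}+c\|g_t\|_{p_2}\le c\,K(w,t)(1+1/t)$, the bound $\|Du-Q\|_{L^{p_1}(\Omega)}\le c\,K(w,1)$ from the previous step, and the standard facts that $t\mapsto K(w,t)$ is nondecreasing while $t\mapsto K(w,t)/t$ is nonincreasing. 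Treating the ranges $t\ge 1$ and $t\le 1$ separately, these give $|Q_t-Q|\min\{1,t\}\le c\,K(w,t)$ in both cases, and therefore $K(Du-Q,t)\le c\,K(w,t)$ for every $t>0$.

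It then remains to pass from this pointwise (in $t$) bound to the norm estimate: integrating $\big(t^{-\theta}K(Du-Q,t)\big)^q$ against $\mathrm{d}t/t$ for $q<\infty$, respectively taking the supremum of $t^{-\theta}K(\,\cdot\,,t)$ for $q=\infty$, yields $\|Du-Q\|_{p,q}\le c\|w\|_{p,q}$ with $c$ depending only on $p$, $n$ and $\Omega$. The main difficulty is that Theorem~\ref{theo:rigiditypqnonlinear} furnishes a scale-dependent rotation $Q_t$; the crux of the argument is the scale-uniform comparison $|Q_t-Q|\min\{1,t\}\le c\,K(w,t)$, which relies both on the bounded-domain embedding $L^{p_2}(\Omega)\hookrightarrow L^{p_1}(\Omega)$ and on the monotonicity properties of the $K$-functional.
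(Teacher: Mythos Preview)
Your argument is correct, but the paper resolves the scale-dependence of $Q_t$ by a different and somewhat slicker device. Instead of fixing $Q=Q_1$ and then estimating $|Q_t-Q|\min\{1,t\}$ via monotonicity of the $K$-functional, the paper subtracts averages: with $R=\frac{1}{|\Omega|}\int_\Omega Du\,\mathrm{d}x$ and $F_t'=F_t-\frac{1}{|\Omega|}\int_\Omega F_t$, $G_t'=G_t-\frac{1}{|\Omega|}\int_\Omega G_t$, one has $Du-R=F_t'+G_t'$ for \emph{every} $t$, with $\|F_t'\|_{p_1}\le 2\|F_t\|_{p_1}$ and $\|G_t'\|_{p_2}\le 2\|G_t\|_{p_2}$. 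This immediately gives $K(Du-R,t)\le cK(w,t)$ without any case analysis or comparison of rotations; only at the very end is the (non-rotation) $R$ replaced by a nearby $Q\in\SO(n)$ via the one-line estimate $\dist(R,\SO(n))\le \dist(Du,\SO(n))+|Du-R|$. Your route works just as well and has the merit of staying within $\SO(n)$ throughout, at the cost of the extra step controlling $|Q_t-Q|$; the paper's averaging trick trades that step for a final projection onto $\SO(n)$.
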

\begin{proof}
Fix any triple $\theta$, $p_1$, $p_2$ with $1<p_1<p_2<\infty$  which
satisfies~\bref{interpolationcondition} and 
set $w=\dist\bigl( Du,\SO(n)\bigr)$. By assumption,
$K(w,t)<\infty$ for almost all $t$. 
%\begin{align*}
% \sup_{t>0} t^{-\theta}K(w, t) \leq \epsilon\,.
%\end{align*}
Hence there exists for almost all $t>0$ a decomposition $w=f_t+g_t$ with 
$f_t\in L^{p_1}(\Omega)$, $g_t\in L^{p_2}(\Omega)$ and
\begin{align*}
 K(w,t)=  \| f_t\|_{p_1} +  t \|g_t\|_{p_2}\,.
\end{align*}
In view of the rigidity estimate in $L^{p_1}+L^{p_2}$, Theorem \ref{theo:rigiditypqnonlinear},
 there exist a rotation $Q_t\in \SO(n)$ and
matrix fields $F_t\in L^{p_1}(\Omega;\MM{n}{n})$, $G_t\in L^{p_2}(\Omega;\MM{n}{n})$ with 
\begin{align*}
 Du = Q_t + F_t + G_t\,,\quad \|F_t\|_{p_1}\leq c\|f_t\|_{p_1}\,,\quad \|G_t\|_{p_2}\leq c\|g_t\|_{p_2}\,.
\end{align*}
We define 
\begin{align*}
 F_t'=F_t-\frac{1}{|\Omega|} \int_\Omega F_t \dv{x}\,,\quad 
G_t'=G_t-\frac{1}{|\Omega|} \int_\Omega G_t \dv{x}\,,\quad 
R=\frac{1}{|\Omega|} \int_\Omega Du \dv{x}
\end{align*}
and obtain
\begin{align*}
 Du = R+ F_t' + G_t'\,,\quad 
\|F_t'\|_{p_1}\le 2\|F_t\|_{p_1}\,,\quad 
\|G_t'\|_{p_2}\le 2\|G_t\|_{p_2}\,.
\end{align*}
Therefore  for almost all
$t>0$  one has
\begin{align*}
 Du = R + F_t' + G_t'\,,\quad \|F_t'\|_{p_1}
+t\|G_t'\|_{p_2}\leq 
c\|f_t\|_{p_1}
+ct\|g_t\|_{p_2}=cK(w,t)\,,
\end{align*}
which implies $K(Du-R,t)\le c K(w,t)$ for almost all $t$. We stress that the constant
does not  depend on $t$ but only on $p_1$, $p_2$, $\Omega$. We conclude that
\begin{equation*}
  \|Du-R\|_{p,q}\le c \|w\|_{p,q}\,.
\end{equation*}
From $\dist(R,\SO(n))\le \dist(Du,\SO(n))+|Du-R|$  we obtain
that there is a rotation $Q$ with $|Q-R|\le c\|w\|_{p,q}$, and conclude
\begin{equation*}
  \|Du-Q\|_{p,q}\le 
  \|Du-R\|_{p,q}+\|Q-R\|_{p,q}
\le c  \|\dist(Du, \SO(n))\|_{p,q}\,.
\end{equation*}
This concludes the proof.
\end{proof}

\subsection{Equiintegrability}
As a second application of our work we show that equiintegrability of the
distance from the set of rotations implies equiintegrability of the distance
from a fixed rotation. 
A sequence $f_k\in  L^p(\Omega)$, $k\in\N$, is
  $L^p$-equiintegrable if for every   $\epsilon>0$ there is a $\delta>0$
  such that for all measurable sets $E\subset\Omega$   with $|E|<\delta$ one has $\int_E
  |f_k|^p \dv{x}\le \epsilon$ for all $k\in\N$.
For bounded sets  $\Omega\subset\R^n$ this is equivalent to the fact that
      \begin{equation}\label{eqequiintegr}
        \lim_{T\to\infty} \sup_{k\in\N} \int_{\{|f_k|>T\}} |f_k|^p \dv{x} =0\,,
      \end{equation}
see for example \cite[Theorem 2.29, page 151]{FonsecaLeoniLpspaces} for a proof.

The following statement generalizes the assertion in \cite[page
221]{FrieseckeJamesMueller2008} concerning the interplay of equiintegrability
and rigidity.
\begin{corollary}\label{corollaryequi}
  Let $\Omega\subset\R^n$ be a
  connected and bounded Lipschitz set. Consider a sequence of positive numbers
  $\eta_k\in(0,\infty)$ and a sequence of functions $u_k\in
  W^{1,p}(\Omega;\R^n)$. Assume that  the sequence
  \begin{equation*}
    d_k=\eta_k\,\dist(Du_k,\SO(n))
  \end{equation*}
is $L^p$-equiintegrable. Then
there are rotations $Q_k\in\SO(n)$ such that
\begin{equation*}
z_k=\eta_k\,(Du_k-Q_k)
\end{equation*}
 is $L^p$-equiintegrable.   
\end{corollary}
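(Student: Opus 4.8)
The plan is to use the mixed-growth rigidity estimate (Theorem~\ref{theo:rigiditypqnonlinear}) together with the characterization~\bref{eqequiintegr} of equiintegrability, splitting the distance function at a threshold in order to land in an $L^p+L^q$ setting for some fixed $q>p$. First I would fix a convenient second exponent, say $q=2p$, and for a threshold $T>0$ to be chosen decompose, for each $k$,
\begin{equation*}
  d_k = d_k\,\chi_{\{d_k\le T\}} + d_k\,\chi_{\{d_k> T\}} =: g_k^{(T)} + f_k^{(T)}\,,
\end{equation*}
so that $g_k^{(T)}\in L^q(\Omega)$ with $\|g_k^{(T)}\|_q\le T^{(q-p)/q}\|d_k\|_p^{p/q}$ bounded uniformly in $k$ (since $L^p$-equiintegrability on a bounded set implies a uniform $L^p$-bound), while $\|f_k^{(T)}\|_p^p=\int_{\{d_k>T\}}|d_k|^p\,\dv x$ can be made small, uniformly in $k$, by taking $T$ large, by~\bref{eqequiintegr}.

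Next I would apply Theorem~\ref{theo:rigiditypqnonlinear} to $\eta_k^{-1}d_k=\dist(Du_k,\SO(n))=\eta_k^{-1}f_k^{(T)}+\eta_k^{-1}g_k^{(T)}$, obtaining rotations $Q_k=Q_k^{(T)}$ and fields with $Du_k-Q_k=F_k+G_k$, $\|F_k\|_p\le c\eta_k^{-1}\|f_k^{(T)}\|_p$ and $\|G_k\|_q\le c\eta_k^{-1}\|g_k^{(T)}\|_q$; multiplying through by $\eta_k$ gives $z_k=\eta_k(Du_k-Q_k)=\eta_kF_k+\eta_kG_k$ with $\|\eta_kF_k\|_p\le c\|f_k^{(T)}\|_p$ small and $\|\eta_kG_k\|_q\le c\|g_k^{(T)}\|_q$ bounded uniformly in $k$. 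The point is that the $L^q$-piece, being uniformly bounded in $L^q$ with $q>p$, is automatically $L^p$-equiintegrable on the bounded set $\Omega$ (by H\"older: $\int_E|\eta_kG_k|^p\le |E|^{1-p/q}\|\eta_kG_k\|_q^p$), while the $L^p$-piece is small in $L^p$-norm, uniformly in $k$. To close the argument rigorously I would, given $\epsilon>0$, first fix $T$ large enough that $c\|f_k^{(T)}\|_p^p<\epsilon/2^{p}$ for all $k$ (possible by equiintegrability of $d_k$), thereby fixing the rotations $Q_k$; then for a measurable $E$ with $|E|$ small enough that $|E|^{1-p/q}(c\|g_k^{(T)}\|_q)^p<\epsilon/2^p$ for all $k$, estimate $\int_E|z_k|^p\le 2^{p-1}\bigl(\int_E|\eta_kF_k|^p+\int_E|\eta_kG_k|^p\bigr)\le\epsilon$.

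The one subtlety worth flagging is that the decomposition threshold $T$ — and hence the choice of rotations $Q_k$ — must be fixed once and for all \emph{before} choosing $\delta$ (the smallness of $|E|$), since changing $T$ changes $Q_k$ and thus $z_k$ itself. This is the only place where some care is needed; everything else is routine. One should also note the hypothesis $u_k\in W^{1,p}$ is what makes $\dist(Du_k,\SO(n))$ lie in $L^p$, so that the trivial part $g_k^{(T)}$ of the decomposition is genuinely in $L^q\subset L^1$ and Theorem~\ref{theo:rigiditypqnonlinear} applies with $W^{1,1}$ regularity. I expect no genuine obstacle here: the main work was done in the rigidity theorem, and this corollary is a bookkeeping consequence of it combined with the elementary fact that uniform boundedness in a higher $L^q$ gives equiintegrability in $L^p$.
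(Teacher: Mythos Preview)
Your approach is essentially the paper's: split $d_k$ at a threshold $T$, apply Theorem~\ref{theo:rigiditypqnonlinear} to the resulting $L^p+L^q$ decomposition, and use that the $L^q$-bounded piece is automatically $L^p$-equiintegrable. The paper takes $q=p+1$ and verifies the tail characterization~\bref{eqequiintegr}, whereas you take $q=2p$ and use the small-set definition directly; these differences are cosmetic.

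One point deserves a sharper statement than the ``subtlety'' you flag. In your procedure $T$ is chosen \emph{after} $\epsilon$ (so that $\|f_k^{(T)}\|_p^p<\epsilon/2^p$), hence $Q_k=Q_k^{(T(\epsilon))}$ depends on $\epsilon$, and what you prove is that for each $\epsilon$ there is an $\epsilon$-dependent sequence $z_k^{(\epsilon)}$ with small integrals over small sets --- not equiintegrability of a single sequence. Saying ``fix $T$ before $\delta$'' does not cure this, because $T$ still varies with $\epsilon$. The clean fix is to note that in Theorem~\ref{theo:rigiditypqnonlinear} the rotation can always be taken to be the one $Q_k$ coming from the classical $L^p$ rigidity estimate, independently of the decomposition $f+g$: indeed $|Q-Q_k|\le c(\|f\|_p+\|g\|_q)$, and this constant can be absorbed into whichever of $F,G$ has the larger norm. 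With $Q_k$ so fixed once and for all, your argument goes through verbatim for every $\epsilon$. (The paper's proof glosses over exactly the same point.)
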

The corresponding linear result can be proven exactly in the same way, using
Theorem \ref{theokorn} instead of Theorem \ref{theo:rigiditypqnonlinear}. 
We further remark that for bounded sequences $(\eta_k)$  the statement follows
immediately from the boundedness of $\SO(n)$, the interesting case is
$\eta_k\to\infty$. 
\begin{proof}
We shall use the characterization of equiintegrability given in
(\ref{eqequiintegr}).
Pick some $\epsilon>0$. Then there is $T_\epsilon$ such that
  \begin{equation*}
    \int_{\{d_k>T_\epsilon\}} d_k^p \dv{x}\le \epsilon
  \end{equation*}
  for all $k$. We define
  \begin{equation*}
    f_k=\dist(Du_k, \SO(n))\chi_{\{d_k>T_\epsilon\}} \text{ and }
    g_k=\dist(Du_k, \SO(n))\chi_{\{d_k\le T_\epsilon\}}
  \end{equation*}
and observe that $\|\eta_k f_k\|_p^p \le \epsilon$ and $\|\eta_k g_k\|_\infty\le T_\epsilon$. 
  We fix some $q\in(p,\infty)$, for example $q=p+1$, and estimate
  \begin{equation*}
    \|\eta_kg_k\|_q^q \le T_\epsilon^{q-p} \|d_k\|_p^p\le M T_\epsilon^{q-p} \,,
  \end{equation*}
  where $M=\sup_k \|d_k\|_p^p$. Notice that $M<\infty$, since $L^p$-equiintegrable
  sequences are bounded in $L^p$.

By Theorem \ref{theo:rigiditypqnonlinear} applied to each $u_k$ there are 
rotations $Q_k$ and fields $F_k$, $G_k$ such that
\begin{equation*}
  Du_k=Q_k+F_k+G_k\,,\hskip5mm
  \|\eta_kF_k\|_p^p\le c \|\eta_k f_k\|_p^p \le c\epsilon\,,\hskip5mm
  \|\eta_k G_k\|_q^q\le c\|\eta_k g_k\|_q^q\,.
\end{equation*}
We set  $E_k=\{\eta_k|G_k|>L_\epsilon\}$ for some  $L_\epsilon>0$ to be chosen later, 
estimate its measure
\begin{equation*}
 |E_k|\le \frac{1}{L_\epsilon^q} \int_{E_k} |\eta_kG_k|^q \dv{x}
 \le \frac{c}{L_\epsilon^q} \|\eta_kg_k\|_q^q
\end{equation*}
and the $L^p$ norm on $E_k$ via
\begin{equation*}
  \|\eta_k G_k\|_{L^p(E_k)} \le |E_k|^{1/p-1/q}   \|\eta_k G_k\|_{L^q(E_k)} \,.
\end{equation*}
We conclude that 
\begin{equation*}
  \int_{E_k} |\eta_k G_k|^p \dv{x}\le |E_k|^{(q-p)/q}   \|\eta_k g_k\|_q^p
  \le 
\frac{c}{L_\epsilon^{q-p}}   \|\eta_kg_k\|_q^{q}
\le c M \frac{T_\epsilon^{q-p}}{L_\epsilon^{q-p}}\,.
\end{equation*}
We finally choose $L_\epsilon=T_\epsilon /\epsilon^{1/(q-p)}$, so that the last fraction
is smaller than $\epsilon$ and obtain for  $z_k=\eta_k(Du_k-Q_k)=\eta_kF_k+\eta_kG_k$, setting
$E'_k=\{|z_k|>2L_\epsilon\}$, 
\begin{alignat*}1
  \int_{E'_k} |z_k|^p \dv{x}
&=   \int_{E'_k\cap \{|F_k|\ge|G_k|\}} |z_k|^p\dv{x}
+   \int_{E'_k\cap \{|F_k|<|G_k|\}} |z_k|^p\dv{x}\\
&  \le   2^p \|\eta_kF_k\|^p_p + 2^p\int_{E_k} |\eta_kG_k|^p \dv{x}
\le c\epsilon + cM\epsilon\,.
\end{alignat*}
Therefore for all $t>2L_\epsilon$ and all $k$ we have $\int_{\{|z_k|>t\}}|z_k|^p\dv{x}\le
c(1+M)\epsilon$, and (\ref{eqequiintegr}) is proven.
\end{proof}

\subsection{Multiple exponents}
In this section we generalize our results to the case of more than two exponents. We first establish 
the linear estimate which is parallel to Theorem \ref{theokorn} and then indicate how
this estimate implies the corresponding nonlinear estimate.

\begin{theorem}\label{theo:multikorn}
  Let $\Omega\subset\R^n$ be a bounded and connected domain with  Lipschitz boundary.
Suppose that $1<p_1<p_2<\ldots <p_N<\infty$, $N\ge 1$, and that $u\in
W^{1,1}(\Omega;\R^n)$, $f_\alpha \in 
  L^{p_\alpha}(\Omega;\R^{n\times n})$, $\alpha=1, \ldots, N$, 
are given with
\begin{equation*}
Eu=\frac{1}{2}\bigl(  D u + D u^T \bigr) 
=   \sum_{\alpha=1}^N f_\alpha  \qquad a.e.\text{ in }\Omega\,.
\end{equation*}
Then there exists a constant $c$, matrix fields $F_\alpha\in
L^{p_\alpha}(\Omega;\R^{n\times n})$, $\alpha=1, \ldots, N$, and a skew-symmetric matrix  
$S\in \R^{n\times n}$ such that
\begin{equation*}
  D u = S + \sum_{\alpha=1}^N F_\alpha \qquad a.e.\text{ in }\Omega\,,
\end{equation*}
and
\begin{equation*}
  \|F_\alpha\|_{L^{p_\alpha}(\Omega)}\le c \| f_\alpha  \|_{L^{p_\alpha}(\Omega)}\,,\quad\alpha=1, \ldots, N\,.
\end{equation*}
The constant depends only on $n$, $\Omega$, $p_1,p_2,\dots, p_N$.
\end{theorem}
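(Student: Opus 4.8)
The plan is to prove Theorem \ref{theo:multikorn} by induction on the number of exponents $N$, using the two-exponent result Theorem \ref{theokorn} as the base case $N=2$ (the case $N=1$ is just the classical Korn inequality \bref{eqkornlp}). The induction step will absorb the lowest-order term into the rest by a H\"older-interpolation trick analogous to the reductions at the start of the proof of Theorem \ref{theokorn}.

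\medskip

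For the induction step, assume the statement for $N-1$ exponents and let $1<p_1<\dots<p_N<\infty$ with $Eu=\sum_{\alpha=1}^N f_\alpha$. First I would dispose of the easy regime: if $\|f_1\|_{p_1}\le \sum_{\alpha=2}^N\|f_\alpha\|_{p_\alpha}$, then since $\Omega$ is bounded H\"older's inequality gives $\|f_1\|_{p_2}\le c\|f_1\|_{p_1}\le c\sum_{\alpha\ge 2}\|f_\alpha\|_{p_\alpha}$, so $\tilde f_2:=f_2+f_1$ together with $f_3,\dots,f_N$ realizes $Eu$ as a sum of $N-1$ terms with $\|\tilde f_2\|_{p_2}\le c\sum_{\alpha\ge2}\|f_\alpha\|_{p_\alpha}$; applying the inductive hypothesis and then \bref{getdecomposition} to split the resulting $F_2$-estimate back into an $L^{p_1}$ and an $L^{p_2}$ contribution (using $\|f_1\|_{p_1}\le\sum_{\alpha\ge2}\|f_\alpha\|_{p_\alpha}$ to control the spurious term) finishes this case. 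Hence we may assume $\|f_1\|_{p_1}\ge\sum_{\alpha=2}^N\|f_\alpha\|_{p_\alpha}$, which in particular means every $\|f_\alpha\|_{p_\alpha}$ for $\alpha\ge2$ is dominated by $\|f_1\|_{p_1}$.

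\medskip

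In this remaining regime I would instead peel off the \emph{top} exponent. Group $h:=\sum_{\alpha=1}^{N-1}f_\alpha$ and write $Eu=h+f_N$ with $h\in L^{p_{N-1}}(\Omega)$, $f_N\in L^{p_N}(\Omega)$ and (by the ordering just established and H\"older) $\|h\|_{p_{N-1}}\le c\|f_1\|_{p_1}$ and $\|f_N\|_{p_N}\le\|f_1\|_{p_1}$. Apply the two-exponent Theorem \ref{theokorn} with $(p,q)=(p_{N-1},p_N)$: there are a skew matrix $S$ and fields $H\in L^{p_{N-1}}$, $F_N\in L^{p_N}$ with $Du=S+H+F_N$, $\|H\|_{p_{N-1}}\le c\|h\|_{p_{N-1}}\le c\|f_1\|_{p_1}$ and $\|F_N\|_{p_N}\le c\|f_N\|_{p_N}\le c\|f_1\|_{p_1}$. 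Now $H=Du-S-F_N$, and from $EH=Eu-EF_N=\sum_{\alpha=1}^{N-1}f_\alpha-EF_N$; but this is circular unless I know $EF_N$ is controlled in the lower norms. The cleaner route, which I would actually follow: since in this regime $\|Eu\|_{p_{N-1}}\le\sum_\alpha\|f_\alpha\|_{p_{N-1}}\le c\|f_1\|_{p_1}$ already by H\"older, simply apply the $(N-1)$-exponent inductive hypothesis to $Eu=\sum_{\alpha=1}^{N-1}f_\alpha+0$ after merging: more precisely set $\tilde f_{N-1}:=f_{N-1}+f_N$, use $\|\tilde f_{N-1}\|_{p_{N-1}}\le c\|f_1\|_{p_1}$, invoke the hypothesis for $p_1<\dots<p_{N-1}$ to get $Du=S+\sum_{\alpha=1}^{N-1}F_\alpha$ with $\|F_\alpha\|_{p_\alpha}\le c\|f_\alpha\|_{p_\alpha}$ for $\alpha<N-1$ and $\|F_{N-1}\|_{p_{N-1}}\le c\|f_1\|_{p_1}$, and finally split $F_{N-1}$ via \bref{getdecomposition} into an $L^{p_{N-1}}$ piece bounded by $c\|f_{N-1}\|_{p_{N-1}}$ and an $L^{p_N}$ piece $F_N$ bounded by $c\|f_N\|_{p_N}$, using the pointwise bound $|F_{N-1}|\le c(|f_{N-1}|+|f_N|)$ inherited from the construction in Theorem \ref{theokorn} (Step 5 there produces $F$, $G$ as explicit multiples of the data). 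The main obstacle is exactly this last bookkeeping: Theorem \ref{theo:multikorn} as stated only gives \emph{norm} estimates on the $F_\alpha$, not pointwise ones, so to reiterate the decomposition one must either (a) strengthen the inductive statement to also record a pointwise bound $|F_\alpha|\le c\sum_\beta|f_\beta|$ (which the proof of Theorem \ref{theokorn} does deliver, via \bref{getdecomposition} and the cutoff construction), or (b) re-run the covering-and-harmonic-splitting argument of Theorem \ref{theokorn} directly with $N$ exponents, distributing the single spurious lower-order error across the higher-order terms at the very end using the regime assumption $\|f_1\|_{p_1}\ge\sum_{\alpha\ge2}\|f_\alpha\|_{p_\alpha}$. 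Option (a) is the economical one and is what I would write up.
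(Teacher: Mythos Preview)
Your proposal has two genuine gaps.

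\medskip

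\textbf{First gap: the H\"older inequality is applied in the wrong direction.} In your first reduction you write ``since $\Omega$ is bounded H\"older's inequality gives $\|f_1\|_{p_2}\le c\|f_1\|_{p_1}$''. This is false: for $p_1<p_2$ on a bounded domain one has $L^{p_2}\hookrightarrow L^{p_1}$, i.e.\ $\|f\|_{p_1}\le c\|f\|_{p_2}$, not the reverse. A function in $L^{p_1}$ need not even belong to $L^{p_2}$. Consequently the merge $\tilde f_2=f_1+f_2$ is not controlled in $L^{p_2}$ and the whole first case collapses. The paper's reduction goes the opposite way: whenever $\|f_\alpha\|_{p_\alpha}\ge\|f_{\alpha+1}\|_{p_{\alpha+1}}$ for some $\alpha$, one merges \emph{down}, setting $\tilde f_\alpha=f_\alpha+f_{\alpha+1}$ and using $\|f_{\alpha+1}\|_{p_\alpha}\le c\|f_{\alpha+1}\|_{p_{\alpha+1}}\le c\|f_\alpha\|_{p_\alpha}$, which is the correct H\"older direction. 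One then applies the $(N-1)$-exponent result and simply takes $F_{\alpha+1}=0$. After this reduction one is left with the monotone regime $\|f_1\|_{p_1}\le\|f_2\|_{p_2}\le\cdots\le\|f_N\|_{p_N}$, the analogue of \bref{order}.

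\medskip

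\textbf{Second gap: the pointwise bound you need for Option~(a) does not hold.} You claim that the construction in Theorem~\ref{theokorn} produces $F$, $G$ as ``explicit multiples of the data'' and hence delivers a pointwise estimate $|F_\alpha|\le c\sum_\beta|f_\beta|$. It does not. In Step~3 of that proof the local fields are $F=Du_f$ and $G=Du_g+Dw-S_\ell$, where $u_f$, $u_g$ are Newton-potential convolutions of $f$, $g$ (see \bref{eqdefuf}); their gradients are singular integral operators applied to the data and satisfy only the $L^p$ bounds $\|Du_f\|_p\le A_p\|f\|_p$, never a pointwise bound by $|f|+|g|$. The identity \bref{getdecomposition} is therefore unavailable for splitting the $F_{N-1}$ you obtain, and Option~(a) cannot be carried out as stated.

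\medskip

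What actually works is your Option~(b), which is precisely the paper's route: once in the monotone regime, re-run Steps~1--5 of Theorem~\ref{theokorn} with $N$ summands $u_1,\dots,u_N$ defined as in \bref{eqdefuf}, so that $w=u-\sum_\alpha u_\alpha$ is harmonic, apply Korn in $L^{p_N}$ to $w$, and in the final global estimate put all cross terms into $F_N$ using $\|f_\alpha\|_{p_\alpha}\le\|f_N\|_{p_N}$. This requires no pointwise control and no further induction.
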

\begin{proof}
  We follow the scheme of the proof of Theorem \ref{theokorn}.
  By induction we can assume that the statement has been proven for $N-1$ exponents with $N\geq 3$, the assertion for $N=2$ is the
statement in Theorem \ref{theokorn}.
  If $\|f_\alpha\|_{p_\alpha}\ge \|f_{\alpha+1}\|_{p_{\alpha+1}}$ for some $\alpha\in\{ 1, \ldots, N-1\}$, then 
we eliminate the exponent $\alpha+1$,
  define $\tilde f_\alpha=f_\alpha+f_{\alpha+1}$, observe
  $\|\tilde f_\alpha\|_{p_\alpha}\le \|f_\alpha\|_{p_\alpha}+c\|f_{\alpha+1}\|_{p_{\alpha+1}}
  \le c \|f_\alpha\|_{p_\alpha}$ and apply the statement to the remaining $N-1$
  exponents and the corresponding $N-1$ functions $f_1, f_2, \dots, f_{\alpha-1}, \tilde f_\alpha, f_{\alpha+2},
  \dots, f_N$. 

  Therefore we may assume that
  \begin{equation*}
    \|f_\alpha\|_{p_\alpha}\le \|f_{\alpha+1}\|_{p_{\alpha+1}} \text{ for all } \alpha\in \{1, \ldots, N\}\,.
  \end{equation*}
  Steps 1 and 2 in the proof of Theorem \ref{theokorn} are unchanged. In Step 3 we define
vector fields  $u_\alpha$ associated to the matrix fields $f_\alpha$ as in (\ref{eqdefuf}). Then
  $\|Du_\alpha\|_{p_\alpha}\le A_{p_\alpha}\|f_\alpha\|_{p_\alpha}$, the
  function
$w=u-\sum_{\alpha=1}^N u_\alpha$ is harmonic, and satisfies
\begin{align*}
  \|Ew\|_{L^{p_N}(B(x_\ell,r_\ell/2))} &\le c
  \|Ew\|_{L^{p_1}(B(x_\ell,r_\ell))} = c
  \bigl\|Eu-\sum_{\alpha=1}^N Eu_\alpha\bigr\|_{L^{p_1}(B(x_\ell,r_\ell))} \\
&\le c \bigl\| \sum_{\alpha=1}^N f_\alpha \bigr\|_{L^{p_\alpha}(B(x_\ell,r_\ell))} + c
\sum_{\alpha=1}^N \|D u_\alpha\|_{L^{p_\alpha}(B(x_\ell,r_\ell))}
\le c \sum_{\alpha=1}^N \| f_\alpha  \|_{L^{p_\alpha}(B(x_\ell,r_\ell))}  \,.
\end{align*}
We  apply Korn's inequality in $L^{p_N}$ to the ball $B(x_\ell,r_\ell)$, 
and obtain the analogous decomposition of $Du$ together
with the estimates. An extension theorem  corresponding to
Theorem \ref{theo:extension}  holds with $N$ terms and
with the same proof, Steps 4 and 5 can be
concluded as before with minor notational changes. 
\end{proof}
We now turn to the nonlinear estimate.
\begin{theorem}\label{theo:rigiditypqnonlinearmanyexponents}
  Let $\Omega\subset\R^n$ be a bounded and connected domain with Lipschitz boundary. Suppose
that  $1<p_1<p_2<\ldots<p_N<\infty$ and that $u\in W^{1,1}(\Omega;\R^n)$,
$f_\alpha\in L^{p_\alpha}(\Omega)$, $\alpha=1,2,\dots, N$ are given with
\begin{align*}
   \dist(D u, \SO(n)) = \sum_{\alpha=1}^N f_\alpha \qquad a.e.\text{ in }\Omega\,.
\end{align*}
Then there exist a constant $c$, matrix fields $F_\alpha\in
L^{p_\alpha}(\Omega;\R^{n\times n})$ and a rotation $Q\in \SO(n)$ such
that 
\begin{equation*}
  D u = Q + \sum_{\alpha=1}^N F_\alpha \qquad a.e.\text{ in }\Omega\,,
\end{equation*}
and
\begin{align}\label{thmestimatesmanya}
  \|F_\alpha\|_{L^{p_\alpha}(\Omega;\R^{n\times n})}\le c
  \|f_\alpha\|_{L^{p_\alpha}(\Omega)}\,,\hskip3mm
\alpha=1,2,\dots N\,.
 \end{align}
The constant $c$ depends only on $n$, $p_1, p_2, \dots, p_N$ and $\Omega$ but
not on $u$ and $f_1,f_2,\dots, f_N$.
\end{theorem}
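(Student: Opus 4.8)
The plan is to mimic the two-term argument: reduce to the Lipschitz case by a truncation argument, and in the Lipschitz case run an induction on how far the largest exponent $p_N$ is from the smallest, linearizing the distance function at a rotation and feeding the quadratic error terms into the multi-exponent linear estimate Theorem~\ref{theo:multikorn}. First I would establish the analogue of Lemma~\ref{lemmalipschitzcase} under the extra hypothesis $|Du|\le M$. As in Step~1 of that lemma, one normalizes so that $0\le f_\alpha\le 2M$ for every $\alpha$ (replacing $f_\alpha$ by $\min\{|f_\alpha|,2M\}$ and noting $\dist(Du,\SO(n))\le\min\{|Du|+\sqrt n,\sum|f_\alpha|\}\le\sum f_\alpha'$), and one may also assume, by eliminating exponents exactly as in the first paragraph of the proof of Theorem~\ref{theo:multikorn}, that $\|f_\alpha\|_{p_\alpha}\le\|f_{\alpha+1}\|_{p_{\alpha+1}}$ for all $\alpha$. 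Then, by the $L^{p_N}$ rigidity estimate of~\cite{FrieseckeJamesMueller2005,ContiSchweizer2006b}, there is $Q\in\SO(n)$ with $\|Du-Q\|_{p_N}\le c\sum_\alpha\|f_\alpha\|_{p_N}\le c\sum_\alpha\|f_\alpha\|_{p_\alpha}^{p_\alpha/p_N}$, using $|f_\alpha|\le 2M$; after composing with $Q$ we may take $Q=\Id$.

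The induction is over $k$ with $p_N\in(2^k p_1,2^{k+1}p_1]$, and the inductive hypothesis should assert, for any admissible data with $N$ exponents in that range, the existence of $Q$ and fields $F_\alpha$ with $Du=Q+\sum F_\alpha$, $\|F_\alpha\|_{p_\alpha}\le c_k\|f_\alpha\|_{p_\alpha}$ and the pointwise bound $|F_\alpha|\le M+\sqrt n$ (the latter is free once the norm bound holds, by the same truncation-redistribution trick used after~(\ref{uniformbound})). For the base case $p_N\le 2p_1$ one uses the Taylor expansion $|Eu-\Id|\le c\,\dist(Du,\SO(n))+c|Du-\Id|^2$; since $|Du-\Id|\le M+\sqrt n$ and $2p_1\ge p_N$ one gets $\||Du-\Id|^2\|_{p_1}^{p_1}\le c\int|Du-\Id|^{p_N}\le c\sum_\alpha\|f_\alpha\|_{p_\alpha}^{p_\alpha}\le c\|f_1\|_{p_1}^{p_1}$ (here one may assume $\|f_1\|_{p_1}^{p_1}\ge\|f_\alpha\|_{p_\alpha}^{p_\alpha}$ for $\alpha\ge2$, else the whole distance fits into the higher exponents and one is done directly). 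Setting $\widetilde f_1=f_1+|Du-\Id|^2$ and $\widetilde f_\alpha=f_\alpha$ for $\alpha\ge2$, Theorem~\ref{theo:multikorn} produces a skew matrix $S$ and fields $\widetilde F_\alpha$ with the right norm bounds, and one converts $\Id+S$ to a genuine rotation by the now-standard estimate $|\Id+S-Q|\le|\Id+S-Du|+\dist(Du,\SO(n))\le\sum(|\widetilde F_\alpha|+|f_\alpha|)$, redistributing the constant correction $Q-\Id-S$ into whichever $F_\alpha$ has the largest norm. For the inductive step, if $p_N\in(2^{k+1}p_1,2^{k+2}p_1]$ then $(2p_1,p_2,\dots,p_N)$ lies in the previous range, so the hypothesis gives $Q$ and fields $\widehat F_1,\widehat F_2,\dots,\widehat F_N$ with $\|\widehat F_1\|_{2p_1}\le c\|f_1\|_{2p_1}\le c\|f_1\|_{p_1}^{1/2}$, $\|\widehat F_\alpha\|_{p_\alpha}\le c\|f_\alpha\|_{p_\alpha}$, and $|\widehat F_\alpha|\le M+\sqrt n$; the Taylor estimate then gives $|Eu-\Id|\le c\sum_\alpha f_\alpha+c\sum_\alpha|\widehat F_\alpha|^2$, where $\||\widehat F_1|^2\|_{p_1}=\|\widehat F_1\|_{2p_1}^2\le c\|f_1\|_{p_1}$ and $\||\widehat F_\alpha|^2\|_{p_\alpha}\le(M+\sqrt n)\|\widehat F_\alpha\|_{p_\alpha}\le c\|f_\alpha\|_{p_\alpha}$, and one again closes via Theorem~\ref{theo:multikorn} with $\widetilde f_\alpha=f_\alpha+|\widehat F_\alpha|^2$.

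It remains to remove the Lipschitz assumption, which is done verbatim as in the proof of Theorem~\ref{theo:rigiditypqnonlinear}: one may assume $\|f_\alpha\|_{p_\alpha}\le\|f_N\|_{p_N}$ for all $\alpha$ (else the distance fits into the top exponents by H\"older and one uses $L^{p_N}$ rigidity), applies Theorem~\ref{theo:truncation} with $\lambda=2n$ to get a set $E$ on which $u$ is $M$-Lipschitz with $M=2n\,\const{eg}$, extends $u|_E$ to an $M$-Lipschitz $u_M$ by Kirszbraun, adds the defect $2M\chi_{\Omega\setminus E}$ to whichever $f_\alpha$ carries the largest power $\|f_\alpha\|_{p_\alpha}^{p_\alpha}$ (so $\dist(Du_M,\SO(n))\le\sum f_{M,\alpha}$ a.e.), and checks $\|f_{M,\alpha}\|_{p_\alpha}\le c\|f_\alpha\|_{p_\alpha}$ using the measure bound $|\Omega\setminus E|\le c\sum_\alpha\|f_\alpha\|_{p_\alpha}^{p_\alpha}$ that follows from $|Du|\le\max_\alpha 4 N f_\alpha$ on $\{|Du|>2n\}$. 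Finally $Du=Du_M$ on $E$ and $|Du-Du_M|\le|Du|+M\le\dist(Du,\SO(n))+\sqrt n+M\le\sum f_{M,\alpha}$ off $E$, so the decomposition of $Du_M-R$ from the Lipschitz case, together with (\ref{getdecomposition}), gives the decomposition of $Du-R$ and hence of $Du-Q$. I expect the only genuine subtlety to be bookkeeping in the induction — keeping track of which exponent absorbs the quadratic error at each stage and verifying the $2p_1$ shift keeps all exponents above $1$ — but no new idea beyond the two-term case is needed; the multi-exponent Korn inequality Theorem~\ref{theo:multikorn} does all the heavy lifting.
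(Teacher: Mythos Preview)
Your overall strategy matches the paper's exactly: prove the Lipschitz case by induction on $k$ with $p_N\in(2^kp_1,2^{k+1}p_1]$, linearizing via the Taylor estimate and closing with Theorem~\ref{theo:multikorn}, then remove the Lipschitz hypothesis by truncation and Kirszbraun exactly as in the two-term case. The base case and the truncation step are fine (one small slip: in your final paragraph the parenthetical ``else the distance fits into the top exponents by H\"older and one uses $L^{p_N}$ rigidity'' should read ``else merge $f_N$ downward into $f_\alpha$ by H\"older and use the $(N{-}1)$-exponent result'').

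There is, however, a genuine gap in your inductive step. You assert that if $(p_1,\dots,p_N)\in\Lambda_{k+1}$ then $(2p_1,p_2,\dots,p_N)$ lies in $\Lambda_k$, but this requires $2p_1\le p_2$. When $p_2<2p_1$ the new tuple, once reordered to be increasing, has smallest exponent $p_2$, and the ratio $p_N/p_2$ need not drop below $2^{k+1}$. For instance $(p_1,p_2,p_3)=(2,\,2.1,\,16)$ lies in $\Lambda_2$ (since $16/2=8$), and your step produces $(4,\,2.1,\,16)$, i.e.\ $(2.1,\,4,\,16)$ after reordering, with $16/2.1\approx7.6$ --- still in $\Lambda_2$, so the inductive hypothesis does not apply. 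The paper avoids this by doubling \emph{all} exponents simultaneously, setting $q_1=2p_1$, $q_\alpha=\min\{2p_\alpha,p_N\}$ for $2\le\alpha<N$, $q_N=p_N$; this keeps the tuple nondecreasing with smallest entry $2p_1$, so $(q_1,\dots,q_N)\in\Lambda_k$ automatically. Your $L^\infty$-based squaring estimate $\||\widehat F_\alpha|^2\|_{p_\alpha}\le(M+\sqrt n)\|\widehat F_\alpha\|_{p_\alpha}$ for $\alpha\ge2$ is then replaced (or supplemented) by $\||\widehat F_\alpha|^2\|_{p_\alpha}^{p_\alpha}=\|\widehat F_\alpha\|_{2p_\alpha}^{2p_\alpha}\le c\|\widehat F_\alpha\|_{q_\alpha}^{q_\alpha}\le c\|f_\alpha\|_{p_\alpha}^{p_\alpha}$, using $q_\alpha\le 2p_\alpha$ together with the pointwise bound. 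With this single change your argument goes through and coincides with the paper's.
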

As above, we start from the case that $u$ is Lipschitz.
\begin{lemma}\label{lemmalipschitzcasemanyexponents}
Theorem~\ref{theo:rigiditypqnonlinearmanyexponents}   holds under  the additional assumption that
there exists a constant $M>n$ so that  $|D u|\le M$ a.e. The constant 
in the estimate~(\ref{thmestimatesmanya}) depends on $M$. 
\end{lemma}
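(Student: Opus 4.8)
The plan is to imitate the proof of Lemma~\ref{lemmalipschitzcase}, with the two‑exponent Korn inequality (Theorem~\ref{theokorn}) replaced by its $N$‑exponent version (Theorem~\ref{theo:multikorn}), and to argue by induction on $N$, the case $N=2$ being Lemma~\ref{lemmalipschitzcase}. As there, the first step is a harmless normalisation: replacing each $f_\alpha$ by $\min\{|f_\alpha|,(N{+}1)M\}$ we may assume $0\le f_\alpha\le (N{+}1)M$, since this changes $\sum_\alpha f_\alpha$ only by a multiplicative constant while preserving $\dist(Du,\SO(n))\le\sum_\alpha f_\alpha$ (using $\dist(Du,\SO(n))\le|Du|+\sqrt n\le M+\sqrt n$). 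Exactly as in (\ref{uniformbound}) I would carry the pointwise bounds $|F_\alpha|\le M+\sqrt n$ along the induction; once the norm estimate (\ref{thmestimatesmanya}) is established they can always be enforced a posteriori by replacing each $F_\alpha$ on $\{|F_\alpha|>M+\sqrt n\}$ by $Du-Q$ (minus the pieces already moved to other components), as in the computation displayed after (\ref{uniformbound}).

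The heart of the argument is a linearisation at a rotation, in two regimes. Let $\beta$ maximise $\|f_\alpha\|_{p_\alpha}^{p_\alpha}$. In the first regime, when $2p_\beta\ge p_N$, I would use rigidity in $L^{p_N}$ to get $Q\in\SO(n)$ with $\|Du-Q\|_{p_N}\le c\sum_\alpha\|f_\alpha\|_{p_N}$, and then, from the $L^\infty$‑bound ($\|f_\alpha\|_{p_N}\le c\|f_\alpha\|_{p_\alpha}^{p_\alpha/p_N}$), deduce $\|Du-Q\|_{p_N}^{p_N}\le c\sum_\alpha\|f_\alpha\|_{p_\alpha}^{p_\alpha}\le c\|f_\beta\|_{p_\beta}^{p_\beta}$, hence $\|Du-Q\|_{p_N}\le c\|f_\beta\|_{p_\beta}^{p_\beta/p_N}$. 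After composing with a rotation so $Q=\Id$, Taylor expansion of $\dist(\cdot,\SO(n))$ at $\Id$ gives $|Eu-\Id|\le c\sum_\alpha f_\alpha+c|Du-\Id|^2$, and since $2p_\beta\ge p_N$, $\|\,|Du-\Id|^2\,\|_{p_\beta}=\|Du-\Id\|_{2p_\beta}^2\le c\|Du-\Id\|_{p_N}^{p_N/p_\beta}\le c\|f_\beta\|_{p_\beta}$. Thus with $\tilde f_\beta=f_\beta+|Du-\Id|^2$ and $\tilde f_\alpha=f_\alpha$ otherwise, $|Eu-\Id|\le c\sum_\alpha\tilde f_\alpha$ with $\|\tilde f_\alpha\|_{p_\alpha}\le c\|f_\alpha\|_{p_\alpha}$; Theorem~\ref{theo:multikorn} produces a skew matrix $S$ and fields $\tilde F_\alpha$ with $Du-\Id=S+\sum_\alpha\tilde F_\alpha$, $\|\tilde F_\alpha\|_{p_\alpha}\le c\|f_\alpha\|_{p_\alpha}$, and choosing $R\in\SO(n)$ closest to $\Id+S$ and using $|\Id+S-R|\le|\sum_\alpha\tilde F_\alpha|+\sum_\alpha f_\alpha$ shows the constant matrix $\Id+S-R$ has $L^1(\Omega)$‑norm $\le c\sum_\alpha\|f_\alpha\|_{p_\alpha}$, so it can be absorbed into the component with the largest norm $\|f_{\alpha}\|_{p_\alpha}$, giving $Du=R+\sum_\alpha F_\alpha$. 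In the second regime, when $2p_\beta<p_N$, I would instead invoke the statement for a "narrower" exponent family applied to the same data — either $(2p_1,p_2,\dots,p_N)$, which has the same number of exponents but strictly smaller ratio $p_N/p_1$ (here $f_1\in L^{2p_1}$ by the $L^\infty$‑bound), or, when $p_2\le 2p_1$, a family with one fewer exponent obtained by merging the two smallest — to obtain $Du=Q+\sum_\alpha G_\alpha$ with $G_\alpha$ bounded, $\|G_1\|_{2p_1}\le c\|f_1\|_{p_1}^{1/2}$ and $\|G_\alpha\|_{p_\alpha}\le c\|f_\alpha\|_{p_\alpha}$ for $\alpha\ge2$. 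Then $\|\,|G_\alpha|^2\,\|_{p_\alpha}\le c\|f_\alpha\|_{p_\alpha}$, so the Taylor estimate $|Eu-\Id|\le c\sum_\alpha f_\alpha+cN\sum_\alpha|G_\alpha|^2$ again feeds into Theorem~\ref{theo:multikorn}, and one concludes exactly as above. An outer induction on $N$ together with an inner induction on $\lceil\log_2(p_N/p_1)\rceil$ closes the argument.

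The step I expect to be the main obstacle is the second regime: organising the induction so that it terminates while keeping the norm bookkeeping under control. The clean "double the smallest exponent" move lowers $p_N/p_1$ only when $p_2>2p_1$; when $p_2\le 2p_1$ one must merge $f_1$ and $f_2$ into a single term of $L^{p_2}$, and the bound $\|f_1\|_{p_2}\le c\|f_1\|_{p_1}^{p_1/p_2}$ provided by the $L^\infty$‑estimate is not by itself controlled by $\|f_2\|_{p_2}$. Resolving this requires an extra case distinction — of the same type as in Step~1 of Lemma~\ref{lemmalipschitzcase}, comparing the quantities $\|f_\alpha\|_{p_\alpha}^{p_\alpha}$ — to decide to which component of the final decomposition each part of the quadratic error is attributed. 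That is the place where the details must be written out carefully; the remainder is a routine transcription of the two‑exponent proof with $N$ in place of $2$.
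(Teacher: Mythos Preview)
Your approach is correct and parallels the paper's, but the obstacle you flag in the second regime is one the paper sidesteps by a cleaner induction step. Rather than doubling only $p_1$ (and then merging when $p_2\le 2p_1$), the paper replaces the \emph{whole} family by $q_\alpha=\min\{2p_\alpha,p_N\}$ and applies the inductive hypothesis to $(q_1,\dots,q_N)$ with the \emph{same} data $f_1,\dots,f_N$: since $q_\alpha\ge p_\alpha$ the $L^\infty$-bound gives $\|f_\alpha\|_{q_\alpha}^{q_\alpha}\le c\|f_\alpha\|_{p_\alpha}^{p_\alpha}$, and since $q_\alpha\le 2p_\alpha$ the output satisfies $\|\,|F_\alpha|^2\|_{p_\alpha}^{p_\alpha}=\|F_\alpha\|_{2p_\alpha}^{2p_\alpha}\le c\|F_\alpha\|_{q_\alpha}^{q_\alpha}\le c\|f_\alpha\|_{p_\alpha}^{p_\alpha}$ for every $\alpha$ simultaneously, so each quadratic error lands in the correct $L^{p_\alpha}$ with no merging and no case distinction on the sizes of $\|f_\alpha\|_{p_\alpha}^{p_\alpha}$. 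The induction is then a single loop on $k$ with $\Lambda_k=\{2^kp_1<p_N\le 2^{k+1}p_1\}$, and the base case $p_N\le 2p_1$ is handled after first reducing (by merging $f_\alpha$ into $f_{\alpha+1}$ whenever $\|f_\alpha\|_{p_\alpha}^{p_\alpha}<\|f_{\alpha+1}\|_{p_{\alpha+1}}^{p_{\alpha+1}}$, an induction on $N$) to decreasing norms, so your maximizer is always $\beta=1$. Your Option~A/B split with the extra case distinction you sketch at the end does close, but the uniform doubling $q_\alpha=\min\{2p_\alpha,p_N\}$ dissolves the bookkeeping problem rather than confronting it.
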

\begin{proof}
  As above, we may assume $0\le f_\alpha\le 2M$, and proceed by induction on $N$.

{\em  Step 1: $p_N\le 2p_1$}. 
We first  reduce to the case 
  \begin{equation}\label{eqnormfalphaalpha1}
    \|f_\alpha\|_{p_\alpha}^{p_\alpha}\ge 
    \|f_{\alpha+1}\|_{p_{\alpha+1}}^{p_{\alpha+1}}\,,\hskip3mm
    \alpha=1, 2, \dots, N-1\,.
  \end{equation}
  If this does not hold for some $\alpha$, we set $\tilde f_{\alpha+1}=f_\alpha+f_{\alpha+1}$,
  estimate
$   \|\tilde f_{\alpha+1}\|_{p_{\alpha+1}}^{p_{\alpha+1}}\le
c \| f_{\alpha+1}\|_{p_{\alpha+1}}^{p_{\alpha+1}}+
c \| f_{\alpha}\|_{p_{\alpha}}^{p_{\alpha}}\le c \|
 f_{\alpha+1}\|_{p_{\alpha+1}}^{p_{\alpha+1}}$, and apply the result with the
 remaining $N-1$ exponents.

 By the rigidity in $L^{p_N}$ there is $Q\in \SO(n)$ with
\begin{align*}
    \|D u-Q\|_{p_N}^{p_N} \le c \|\dist(D u, \SO(n))\|_{p_N}^{p_N} \le c
    \sum_{\alpha=1}^N     \|f_\alpha \|_{p_N}^{p_N} \le c \|f_1\|_{p_1}^{p_1}\,,
 \end{align*}
where we used
 (\ref{eqnormfalphaalpha1}) and $\|f_\alpha\|_{p_N}^{p_N}\le
 (2M)^{p_N-p_\alpha}\|f_\alpha\|_{p_\alpha}^{p_\alpha}$. 
 We reduce to the case $Q=\Id$, expand and estimate 
\begin{align*}
    \left\| |D u -\Id|^2 \right\|_{p_1}^{p_1} = \int_\Omega |D u-\Id|^{2p_1} \dv{x}
    \le (2M)^{2p_1-p_N} \int_\Omega |D u-\Id|^{p_N}  \dv{x}\le c \|f_1\|_{p_1}^{p_1}\,.
\end{align*}
We set $\tilde f_1=f_1+|Du-\Id|^2$, $\tilde f_\alpha=f_\alpha$ for $\alpha\ge
2$. Since $|Eu-\Id|\le \sum_\alpha \tilde f_\alpha$ we can conclude using the
linear estimate.

{\em Step 2.} We define
\begin{equation*}
  \Lambda_k=\{(p_1, p_2,\dots,p_N)\in(1,\infty)^N: 
  p_\alpha\le p_{\alpha+1} \text{ for all }\alpha \text{ and }   
2^kp_1<p_N\leq 2^{k+1}p_1\}
\end{equation*}
and proceed by induction on $k$. Assume $(p_1, \dots, p_N)\in
\Lambda_{k+1}$. We define $q_1=2p_1$, $q_\alpha=\min\{2p_\alpha, p_N\}$ for
$2\le \alpha <N$, $q_N=p_N$. Then $(q_1, \dots, q_N)\in \Lambda_k$. 
Notice that, for all $\alpha$,  $q_\alpha\ge p_\alpha$ hence
$\|f_\alpha\|_{q_\alpha}^{q_\alpha}\le (2M)^{q_\alpha-p_\alpha}
\|f_\alpha\|_{p_\alpha}^{p_\alpha}$.  
We apply the estimate for the exponents $(q_1, \dots, q_N)$, and the same
functions $f_1,\dots, f_N$,
and obtain $Du=Q+\sum_\alpha F_\alpha$, with
\begin{equation*}
  \|F_\alpha\|_{q_\alpha}^{q_\alpha}\le c
  \|f_\alpha\|_{q_\alpha}^{q_\alpha}\le c \|f_\alpha\|_{p_\alpha}^{p_\alpha}. 
\end{equation*}
We reduce to the case $Q=\Id$ and estimate
\begin{equation*}
  |Eu-\Id|\le c \sum_{\alpha=1}^N f_\alpha + c \sum_{\alpha=1}^N |F_\alpha|^2\,.
\end{equation*}
We estimate, since $q_\alpha\le 2p_\alpha$ for all $\alpha$,
\begin{equation*}
  \| \, |F_\alpha|^2 \|_{p_\alpha}^{p_\alpha} =
  \|F_\alpha \|_{2p_\alpha}^{2p_\alpha} \le   
c  \|F_\alpha \|_{q_\alpha}^{q_\alpha} \le   
c  \|f_\alpha \|_{p_\alpha}^{p_\alpha}
\end{equation*}
and conclude with the linear estimate as above.
\end{proof}
\begin{proof}[Proof of Theorem \ref{theo:rigiditypqnonlinearmanyexponents}]
  As in the proof of Theorem \ref{theo:multikorn}, we may assume
  $\|f_\alpha\|_{p_\alpha}\le 
  \|f_{\alpha+1}\|_{p_{\alpha+1}}$.
  We choose $\lambda=2n$ and define $u_M$ as in the proof of Theorem
\ref{theo:rigiditypqnonlinear}. Let $\beta\in\{1, \dots,
  N\}$ be such that $\|f_\beta\|_{p_\beta}^{p_\beta} \ge
  \|f_\alpha\|_{p_\alpha}^{p_\alpha}$ for all $\alpha$. We define
  \begin{equation*}
f^M_\beta=f_\beta+2M\chi_{\Omega\setminus E} \text{ and  }
f^M_\alpha=f_\alpha \text{ for $\alpha\ne\beta$.}
  \end{equation*}
As in the proof of Theorem
\ref{theo:rigiditypqnonlinear}, we show that $\dist(Du_M, \SO(n))\le
\sum_\alpha f^M_\alpha$ and 
that 
$\|f^M_\beta\|_{p_\beta}\le c \|f_\beta\|_{p_\beta}$. We apply Lemma
\ref{lemmalipschitzcasemanyexponents} and conclude as 
in the proof of Theorem \ref{theo:rigiditypqnonlinear}.
\end{proof}

%%%%%%%%%%%%%%%%%%%%%%%%%%%%%%%%%%%%%%%%%%%%%%%%%%%%%%%%%%%%%%%%%%%%%%%%%%
%%                                                                      %%
%%   A  P  P  E  N  X  I  X                                             %%
%%                                                                      %%
%%%%%%%%%%%%%%%%%%%%%%%%%%%%%%%%%%%%%%%%%%%%%%%%%%%%%%%%%%%%%%%%%%%%%%%%%%

\appendix
\section{Extension}
The subsequent extension theorem for functions with mixed growth
follows immediately from the $L^2$-version in~\cite{Nitsche1981}. We include
a sketch of the proof for the convenience of the reader.

\begin{theorem}\label{theo:extension}
  Let $\varphi\in\Lip(\R^{n-1};\R)$ be a Lipschitz function with $\varphi(0)=0$ and 
Lipschitz constant $L$, let
$R>0$ and set  $\Omega=B(0,R)\cap\{(x',x_n)\in\R^{n-1}\times\R\colon x_n< \varphi(x')\}$.
Suppose that  $1<p<q<\infty$ and that $u\in W^{1,1}(\Omega;\R^n)$ with 
\begin{equation}\label{extdecomp}
  D u + D u^T=  f   + g\,, 
\end{equation}
where  $ f  \in L^p(\Omega;\MM{n}{n})$ and $g \in L^q(\Omega;\MM{n}{n})$.
Then there exists for $r=R/(2\sqrt{1+L^2})$ a function $w\in W^{1,1}(B(0,r);\R^n)$, 
and matrix fields $\fext$, $\gext$
such that
$w=u$, $\fext= f $, $\gext=g$ on $\Omega\cap B(0,r)$ and 
\begin{equation*}
  D w + D w^T= \fext + \gext \qquad \text{ on }B(0,r)
\end{equation*}
with
\begin{equation}\label{extest}
  \|\fext\|_{L^p(B(0,r))}\le c \| f  \|_{L^p(B(0,R)\cap\Omega)} \,, \hskip1cm
  \|\gext\|_{L^q(B(0,r))}\le c \|g \|_{L^q(B(0,R)\cap\Omega)} \,.
\end{equation}
The constant $c$ depends only on $n$, $p$, $q$, $\Omega$ but not
on $u$, $ f $, $g$.
\end{theorem}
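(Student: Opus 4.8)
The plan is to reduce the extension with mixed growth to the classical $L^2$-extension construction of Nitsche~\cite{Nitsche1981}, which produces the extended displacement by an explicit reflection-type formula across the Lipschitz graph $x_n=\varphi(x')$. The key observation is that this construction is \emph{linear} in the data and acts on the symmetrized gradient by a bounded, explicitly given integral operator. So first I would recall the structure of Nitsche's extension: after flattening the boundary by the bi-Lipschitz change of variables $(x',x_n)\mapsto(x',x_n-\varphi(x'))$, one extends $u$ from the half-ball $\{x_n<0\}$ to the full ball by a formula of the form $(Eu)(x',x_n)=\sum_k \alpha_k (Eu)(x',-\beta_k x_n)$ for $x_n>0$ with suitably chosen coefficients $\alpha_k>0$ and dilation factors $\beta_k>0$ (a Hestenes/Babi\v{c} type reflection, chosen so that $w\in W^{1,1}$ and the trace matches). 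The crucial point is that this formula is applied at the level of the symmetric part $Eu= f+g$ and then one reconstructs $w$ by solving a first-order system; the operator $T$ mapping $Eu|_\Omega$ to $E w|_{B(0,r)}$ is bounded from $L^s$ to $L^s$ for every $s\in(1,\infty)$, with a norm depending only on $n$, $s$, $L$ (hence on $\Omega$), since it is a finite sum of compositions of anisotropic dilations and the bi-Lipschitz flattening map, all of which are bounded on every $L^s$.

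Second, I would define $\fext := T f$ and $\gext := T g$, where $T$ is exactly Nitsche's extension operator for the symmetrized gradient. Then $\fext\in L^p(B(0,r))$ with $\|\fext\|_{L^p(B(0,r))}\le c\|f\|_{L^p(\Omega\cap B(0,R))}$ and similarly $\|\gext\|_{L^q(B(0,r))}\le c\|g\|_{L^q(\Omega\cap B(0,R))}$, which is precisely~\bref{extest}. By linearity of $T$, $\fext+\gext = T(f+g)=T(Eu)$, and by construction $T(Eu)=Ew$ on $B(0,r)$ for the extended field $w$, i.e. $Dw+Dw^T=\fext+\gext$ on $B(0,r)$. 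On the overlap $\Omega\cap B(0,r)$ the extension operator is the identity (reflection only modifies the values in the added region $x_n>0$), so $w=u$, $\fext=f$, $\gext=g$ there, as required. The radius $r=R/(2\sqrt{1+L^2})$ arises from the geometry: after flattening, a ball $B(0,r)$ in the new coordinates fits inside the image of $B(0,R)\cap\Omega$ together with its reflection, the factor $\sqrt{1+L^2}$ controlling the distortion of the graph map.

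The main technical point — and the only place where the mixed-growth setting differs from~\cite{Nitsche1981} — is to verify that Nitsche's operator genuinely acts \emph{on the symmetrized gradient alone}, i.e. that $w$ can be reconstructed from $Ew = T(Eu)$ together with boundary data so that $w$ and $Dw$ are controlled and $w=u$ on the overlap, without ever needing $Du$ itself in $L^p\cap L^q$ (we only assume $u\in W^{1,1}$). This is where I would be most careful: one uses that $Ew$ determines $Dw$ up to a skew-symmetric matrix field, and that the compatibility (Saint-Venant) conditions inherited from $Eu$ being an actual symmetrized gradient propagate through the reflection formula, so the reconstructed $w$ is single-valued and $W^{1,1}$ on the simply connected set $B(0,r)$. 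Since $T$ is $L^s$-bounded for every $s\in(1,\infty)$ simultaneously with constants depending only on $n,s,L$, applying it once to $f$ with $s=p$ and once to $g$ with $s=q$ gives both estimates in~\bref{extest} with the same geometric constant, completing the proof. All of this is routine once one has unpacked Nitsche's construction; the reader is referred to~\cite{Nitsche1981} for the details of the reflection coefficients and the verification that $w\in W^{1,1}$.
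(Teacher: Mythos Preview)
Your high-level strategy is correct and is exactly what the paper does: Nitsche's extension operator is linear in the data and bounded on $L^s$ for every $s\in(1,\infty)$, so one simply applies it separately to $f$ and to $g$ to obtain $\fext$ and $\gext$.

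However, your description of \emph{how} Nitsche's construction works is inverted, and this creates the spurious difficulty you flag in your last paragraph. Nitsche does not flatten the boundary, reflect $Eu$ by a Hestenes-type formula, and then reconstruct $w$ from the extended strain field. That route would indeed require checking Saint-Venant compatibility for the reflected field, and there is no reason a naive reflection of $Eu$ preserves it; moreover, the symmetric part of the gradient does not behave well under bi-Lipschitz changes of variables (if $v=u\circ\Phi$ then $Ev$ depends on the full $Du\circ\Phi$, not just on $(Eu)\circ\Phi$), so flattening already destroys the structure you want to exploit. The actual construction goes the other way around: one defines $w$ \emph{directly from $u$} (not from $Eu$) by
\begin{equation*}
w(x)=\int_1^2 \psi(\lambda)\Bigl[u\bigl(x-\lambda\delta(x)e_n\bigr)-\lambda\, D\delta(x)\, u_n\bigl(x-\lambda\delta(x)e_n\bigr)\Bigr]\dv{\lambda}
\end{equation*}
for $x\in B(0,r)\setminus\Omega$, where $\delta$ is a regularized distance to $\Omega$ and $\psi$ is chosen with $\int_1^2\psi\,\dv\lambda=1$ and $\int_1^2\lambda\psi\,\dv\lambda=0$. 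The correction term $-\lambda\,D\delta\,u_n$ together with the moment condition on $\psi$ are precisely what arrange, after computing $Dw$ and symmetrizing, that $Ew(x)$ is an explicit linear expression in the values of $Eu$ alone at the reflected points---the skew part of $Du$ drops out. One then substitutes $f$, respectively $g$, for $Eu$ in that expression to define $\fext$ and $\gext$. Since $w$ is given by an explicit formula in $u$, the fact that $w\in W^{1,1}(B(0,r);\R^n)$ is immediate and no reconstruction or compatibility argument is needed. Once you reorder the construction this way, your worry disappears and the proof really is routine.
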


\begin{proof}
  Let $\delta\in C^2(B(0,R)\setminus \Omega)$ be a function such that
  \begin{align*}
    2\dist(x,\Omega)\le \delta(x)\le C \dist(x,\Omega)
  \end{align*}
  and
  \begin{align}\label{distderivatives}
    |D^\alpha\delta(x)| \le C \delta^{1-|\alpha|}(x)\,,\quad \alpha\in\N^n\,,
  \end{align}
see, e.g.,~\cite{Stein1970}. Fix a function $\psi\in C^1(\R)$ with 
\begin{align}\label{psiprop}
 \int_1^2 \psi(\lambda)\dv{\lambda}=1\,,\quad
 \int_1^2 \lambda \psi(\lambda)\dv{\lambda}=0\,.
\end{align}
  We set $w=u$ on $\Omega$ and for $x\in B(0,r)\setminus\Omega$ we define
  \begin{align*}
    w(x)=\int_1^2 \psi(\lambda)\left[ u(x-\lambda \delta(x) e_n)
      - \lambda D\delta(x)u_n(x-\lambda \delta(x) e_n)\right] \dv{\lambda}\,.
  \end{align*}
For ease of notation we omit the arguments in the following calculations and write
$\delta=\delta(x)$ and $u=u(x-\lambda \delta(x) e_n)$ with the same convention for their derivatives.
By the chain rule
  \begin{align*}
    Dw(x)&\, =\int_1^2 \psi(\lambda)\left[ Du 
      (\Id - \lambda e_n\otimes D\delta )
      - \lambda D\delta \otimes Du_n (\Id - \lambda e_n\otimes D\delta )
      -\lambda  u_n D^2\delta\right] \dv{\lambda} \\
&\, = \int_1^2 \psi(\lambda)\left[ Du 
 - \lambda D_nu \otimes  D\delta 
      - \lambda D\delta \otimes
Du_n + \lambda^2 D_nu_n D\delta \otimes D\delta
      -\lambda u_nD^2\delta \right] \dv{\lambda}\,.
  \end{align*}
  Then the symmetric part of the gradient is given by 
  \begin{align*}
    Ew(x)=\int_1^2 \psi(\lambda)\left[ Eu 
 - \lambda (Eu e_n)\otimes  D\delta 
      - \lambda D\delta \otimes
(Eu e_n) + \lambda^2(Eu)_{nn} D\delta \otimes D\delta 
      -\lambda u_nD^2\delta \right] \dv{\lambda}\,.
  \end{align*}
In the last term we write
\begin{align*}
u_n(x-\lambda \delta(x) e_n)  =
u_n(x- \delta(x) e_n) +\int_1^\lambda
D_n u_n (x-s \delta(x) e_n)\delta(x) \dv{s}\,.
\end{align*}
In view of the second property in~\bref{psiprop} the weighted integral of
$u_n(x- \delta(x) e_n)$ is equal to zero, and the other term only depends on $(Eu)_{nn}$.
We recall~\bref{extdecomp} and define for $x\in B(0,r)\setminus\Omega$
  \begin{align*}
    \fext(x)&\, =\int_1^2 \psi(\lambda)\left[  f (x-\lambda \delta(x)e_n )
 - \lambda ( f (x-\lambda \delta(x)  e_n)e_n)\otimes  D\delta(x)  \right] \dv{\lambda} \\
&\, \qquad
      - \int_1^2 \psi(\lambda)\left[ \lambda D\delta(x) \otimes
( f (x-\lambda \delta(x)  e_n)e_n)  \right] \dv{\lambda} \\
&\, \qquad
      + \int_1^2 \psi(\lambda)\left[
\lambda^2 
 f _{nn}(x-\lambda \delta(x) e_n) D\delta(x) \otimes D\delta(x) 
    \right] \dv{\lambda} \\
&\, \qquad -
\int_1^2 \psi(\lambda)
\lambda \int_1^\lambda  f _{nn}(x-s \delta(x)e_n) \delta(x) \dv{s}\, D^2\delta(x)
\dv{\lambda}\,,
  \end{align*}
and use the analogous definition for $\gext$ in $x\in B(0,r)\setminus\Omega$.
On $B(0,r)\cap\Omega$ we set $\fext= f $ and $\gext=g$.
It remains to show that 
\begin{align*}
  \|\fext\|_{L^p(B(0,r)\setminus\Omega)}\le c \| f \|_{L^p(\Omega)}\,,\quad
\|\gext\|_{L^q(B(0,r)\setminus\Omega)}\le c \|g\|_{L^q(\Omega)}
\end{align*}
with a constant which only depends on  $n$,
$p$, $q$ and $\Omega$. The calculation is identical
to the proof of the estimate for the extension in~\cite[Lemma~4]{Nitsche1981} .
\end{proof}

%\bibliographystyle{acm}
%\bibliography{lplq}

\begin{thebibliography}{10}

\bibitem{AgostinianiDalmasoDesimone2012}
{\sc Agostiniani, V., {Dal Maso}, G., and DeSimone, A.}
\newblock Linear elasticity obtained from finite elasticity by
  gamma-convergence under weak coerciveness conditions.
\newblock {\em In preparation\/} (2012).

\bibitem{ButzerBerens1967}
{\sc Butzer, P.~L., and Berens, H.}
\newblock {\em Semi-groups of operators and approximation}.
\newblock Die Grundlehren der mathematischen Wissenschaften, Band 145.
  Springer-Verlag New York Inc., New York, 1967.

\bibitem{ContiSchweizer2006b}
{\sc Conti, S., and Schweizer, B.}
\newblock Rigidity and {Gamma} convergence for solid-solid phase transitions
  with {$SO(2)$}-invariance.
\newblock {\em Comm. Pure Appl. Math. 59\/} (2006), 830--868.

\bibitem{EvansGariepy1992}
{\sc Evans, L.~C., and Gariepy, R.~F.}
\newblock {\em {Measure theory and fine properties of functions.}}
\newblock {Studies in Advanced Mathematics. Boca Raton: CRC Press. viii, 268 p.
  }, 1992.

\bibitem{Federer1996}
{\sc Federer, H.}
\newblock {\em {Geometric measure theory. Repr. of the 1969 ed.}}
\newblock {Classics in Mathematics. Berlin: Springer-Verlag. xvi}, 1996.

\bibitem{FonsecaLeoniLpspaces}
{\sc Fonseca, I., and Leoni, G.}
\newblock {\em Modern methods in the calculus of variations: $L^p$ spaces}.
\newblock Springer, 2007.

\bibitem{Friedrichs1947}
{\sc Friedrichs, K.~O.}
\newblock On the boundary-value problems of the theory of elasticity and
  {K}orn's inequality.
\newblock {\em Ann. of Math. (2) 48\/} (1947), 441--471.

\bibitem{FrieseckeJamesMueller2005}
{\sc Friesecke, G., James, R.~D., and M{\"u}ller, S.}
\newblock A theorem on geometric rigidity and the derivation of nonlinear plate
  theory from three-dimensional elasticity.
\newblock {\em Comm. Pure Appl. Math. 55}, 11 (2002), 1461--1506.

\bibitem{FrieseckeJamesMueller2008}
{\sc Friesecke, G., James, R.~D., and M{\"u}ller, S.}
\newblock A hierarchy of plate models derived from nonlinear elasticity by
  gamma-convergence.
\newblock {\em Arch. Ration. Mech. Anal. 180}, 2 (2006), 183--236.

\bibitem{HavlacekNecasARMA1970I}
{\sc Hlav{\'a}{\v{c}}ek, I., and Ne{\v{c}}as, J.}
\newblock On inequalities of {K}orn's type. {I}. {B}oundary-value problems for
  elliptic system of partial differential equations.
\newblock {\em Arch. Rational Mech. Anal. 36\/} (1970), 305--311.

\bibitem{Hunt1966}
{\sc Hunt, R.~A.}
\newblock On {$L(p,\,q)$} spaces.
\newblock {\em Enseignement Math. (2) 12\/} (1966), 249--276.

\bibitem{JohnCPAM1961}
{\sc John, F.}
\newblock {Rotation and strain.}
\newblock {\em Commun. Pure Appl. Math. 14\/} (1961), 391--413.

\bibitem{John1972}
{\sc John, F.}
\newblock {Bounds for deformations in terms of average strains.}
\newblock {Inequalities III, Proc. 3rd Symp., Los Angeles 1969, 129-144
  (1972).}, 1972.

\bibitem{Kohn82}
{\sc Kohn, R.~V.}
\newblock New integral estimates for deformations in terms of their nonlinear
  strains.
\newblock {\em Arch. Rat. Mech. Anal. 78\/} (1982), 131--172.

\bibitem{KondratevOleinik1988}
{\sc Kondrat{\cprime}ev, V.~A., and Ole{\u\i}nik, O.~A.}
\newblock Boundary value problems for a system in elasticity theory in
  unbounded domains. {K}orn inequalities.
\newblock {\em Uspekhi Mat. Nauk 43}, 5(263) (1988), 55--98, 239.

\bibitem{Korn1906}
{\sc Korn, A.}
\newblock {Die Eigenschwingungen eines elastischen K\"orpers mit ruhender
  Oberfl\"ache.}
\newblock {\em Akad. der Wissensch., Munich, Math. Phys. Kl., Berichte 36\/}
  (1906), 351--401.

\bibitem{Korn1908}
{\sc Korn, A.}
\newblock Solution g\'en\'erale du probl\`eme d'\'equilibre dans la th\'eorie
  de l'\'elasticit\'e, dans le cas ou les efforts sont donn\'es \`a la surface.
\newblock {\em Ann. Fac. Sci. Toulouse Sci. Math. Sci. Phys. (2) 10\/} (1908),
  165--269.

\bibitem{Korn1909}
{\sc Korn, A.}
\newblock {\"Uber einige Ungleichungen, welche in der Theorie der elastischen
  und elektrischen Schwingungen eine Rolle spielen.}
\newblock {\em Bull. Intern. Cracov. Akad, umiejet (Classe Sci. Math. Nat.)\/}
  (1909), 706--724.

\bibitem{Lunardi2009}
{\sc Lunardi, A.}
\newblock {\em Interpolation theory}, second~ed.
\newblock Appunti. Scuola Normale Superiore di Pisa (Nuova Serie). [Lecture
  Notes. Scuola Normale Superiore di Pisa (New Series)]. Edizioni della
  Normale, Pisa, 2009.

\bibitem{Nitsche1981}
{\sc Nitsche, J.~A.}
\newblock On {K}orn's second inequality.
\newblock {\em RAIRO Anal. Num\'er. 15}, 3 (1981), 237--248.

\bibitem{Peetre1963}
{\sc Peetre, J.}
\newblock Nouvelles propri\'et\'es d'espaces d'interpolation.
\newblock {\em C. R. Acad. Sci. Paris 256\/} (1963), 1424--1426.

\bibitem{Reshetnyak1967}
{\sc Reshetnyak, Y.}
\newblock {Liouville's theorem on conformal mappings for minimal regularity
  assumptions.}
\newblock {\em Sib. Math. J. 8\/} (1967), 631--634.

\bibitem{CaterinaLucia}
{\sc Scardia, L., and Zeppieri, C.}
\newblock Geometric rigidity and application to strain-gradient theory for
  plasticity.
\newblock {\em In preparation\/} (2012).

\bibitem{Stein1970}
{\sc Stein, E.~M.}
\newblock {\em {Singular integrals and differentiability properties of
  functions.}}
\newblock {Princeton, N.J.: Princeton University Press. XIV, 287 p. }, 1970.

\bibitem{TartarInterp}
{\sc Tartar, L.}
\newblock {\em An Introduction to Sobolev Spaces and Interpolation Spaces}.
\newblock Springer, 2007.

\bibitem{Ting1971}
{\sc Ting, T.~W.}
\newblock Generalized {K}orn's inequalities.
\newblock {\em Tensor (N.S.) 25\/} (1972), 295--302.
\newblock Commemoration volumes for Prof. Dr. Akitsugu Kawaguchi's seventieth
  birthday, Vol. II.

\end{thebibliography}
\def\cprime{$'$}

\end{document}